\newtheorem{theorem}{Theorem}
\newtheorem{lemma}[theorem]{Lemma}
\newtheorem{corollary}[theorem]{Corollary}
\newtheorem{proposition}[theorem]{Proposition}
\newtheorem{lettertheorem}{Theorem}
\theoremstyle{definition}
\theoremstyle{remark}
\numberwithin{equation}{section}
\newcommand{\D}{\mathbb{D}}
\newcommand{\DD}{\widehat{\mathcal{D}}}
\newcommand{\Dd}{\widecheck{\mathcal{D}}}
\newcommand{\DDD}{\mathcal{D}}
\newcommand{\N}{\mathbb{N}}
\newcommand{\R}{\mathcal{R}}
\newcommand{\e}{\varepsilon}
\newcommand{\ep}{\varepsilon}
\renewcommand{\phi}{\varphi}
\DeclareMathOperator*{\esssup}{ess\,sup}
\def\a{\alpha}       \def\b{\beta}        
\def\d{\delta}           \def\e{\varepsilon}
\def\la{\lambda}     \def\om{\omega}      
       \def\t{\theta}       
         \def\r{\rho}         \def\z{\zeta}
\renewcommand{\H}{\mathcal{H}}
\newenvironment{Prf}{\noindent{\emph{Proof of}}}
{\hfill$\Box$ }
\begin{document}

\title[Radial averaging operator acting on Bergman and Lebesgue spaces]{Radial averaging operator acting on Bergman and Lebesgue spaces}

\keywords{Inner function, Bergman space, Hardy space, Radial average operator, Schwarz-Pick lemma, doubling weight, Muckenhoupt weight, maximal function, Carleson measure}

\thanks{This research was supported in part by Ministerio de Econom\'{\i}a y Competitivivad, Spain, projects
MTM2014-52865-P and MTM2017-90584-REDT; La Junta de Andaluc{\'i}a,
project FQM210; Academy of Finland project no. 268009, and by Faculty of Science and Forestry of University of Eastern Finland.}

\date{\today}

\begin{abstract}
It is shown that the radial averaging operator
	$$
  T_\om(f)(z)=\frac{\int_{|z|}^1f\left(s\frac{z}{|z|}\right)\om(s)\,ds}{\widehat{\om}(z)},\quad \widehat{\om}(z)=\int_{|z|}^1\om(s)\,ds,
  $$
induced by a radial weight $\om$ on the unit disc $\D$, is bounded from the weighted Bergman space~$A^p_\nu$, where $0<p<\infty$ and the radial weight $\nu$ satisfies $\widehat{\nu}(r)\le C\widehat{\nu}\left(\frac{1+r}{2}\right)$ for all $0\le r<1$, to $L^p_\nu$ if and only if the self-improving condition $\sup_{0\le r<1}\frac{\widehat{\om}(r)^p}{\int_{r}^1 s\nu(s)\,ds}\int_0^r\frac{t\nu(t)}{\widehat{\om}(t)^p}\,dt<\infty$ is satisfied. Further, two characterizations of the weak type inequality
    \begin{equation*}
    \eta\left(\left\{z\in\D:|T_\om(f)(z)|\ge\lambda\right\}\right)\lesssim\lambda^{-p} \| f\|_{L^p_\nu}^p,\quad \lambda>0,
    \end{equation*}
are established for arbitrary radial weights $\om$, $\nu$ and $\eta$. Moreover, differences and interrelationships between the cases $A^p_\nu\to L^p_\nu$, $L^p_\nu\to L^p_\nu$ and $L^p_\nu\to L^{p,\infty}_\nu$ are analyzed.
\end{abstract}

\author{Taneli Korhonen}
\address{University of Eastern Finland, P.O.Box 111, 80101 Joensuu, Finland}
\email{taneli.korhonen@uef.fi}

\author{Jos\'e \'Angel Pel\'aez}
\address{Departamento de An\'alisis Matem\'atico, Universidad de M\'alaga, Campus de Teatinos,   29071 M\'alaga, Spain}
\email{japelaez@uma.es}

\author{Jouni R\"atty\"a}
\address{University of Eastern Finland, P.O.Box 111, 80101 Joensuu, Finland}
\email{jouni.rattya@uef.fi}

\maketitle

\section{Introduction and main results}

A function $\om:\D\to[0,\infty)$, integrable over the unit disc $\D$, is called a weight. It  is radial if $\om(z) = \om(|z|)$ for all $z\in\D$. For $0<p<\infty$ and a weight $\om$, the Lebesgue space $L^p_\om$ consists of complex-valued measurable functions $f$ in $\D$ such that
    $$
    \|f\|_{L^p_\om}=\left(\int_\D|f(z)|^p\om(z)\,dA(z)\right)^{\frac1p}<\infty,
    $$
where $dA(z) = \frac{dx\,dy}\pi$ denotes the element of the normalized Lebesgue area measure on $\D$. The weighted Bergman space $A^p_\om$ is the space of analytic functions in $L^p_\om$.

For a radial weight $\om$, we assume throughout the paper that $\widehat{\om}(z) = \int_{|z|}^1 \om(s)\,ds > 0$ for all $z \in \D$, otherwise the Bergman space $A^p_\om$ would contain all analytic functions in $\D$.
For a radial weight, the norm convergence in the Hilbert space $A^2_\om$ implies the uniform convergence on compact subsets of $\D$ and so the point evaluations are bounded linear functionals on $A^2_\om$. Therefore there exist reproducing Bergman kernels $B^\om_z\in A^2_\om$ such that
    $$
    f(z)=\langle f,B^\om_z \rangle_{A^2_\om}
    = \int_\D f(\zeta)\overline{B^\om_z(\zeta)}\om(\zeta)\,dA(\zeta),\quad z\in\D, \quad f\in A^2_\om.
    $$
The Hilbert space $A^2_\om$ is a closed subspace of $L^2_\om$, and hence the orthogonal Bergman projection from $L^2_\om$ to $A^2_\om$ is given by
    $$
    P_\om(f)(z) = \int_\D f(\zeta)\overline{B^\om_z(\zeta)}\om(\zeta)\,dA(\zeta),\quad z\in\D.
    $$

A radial weight $\om$ belongs to the class~$\DD$ if there exists a constant $C=C(\om)>1$ such that the doubling condition $\widehat{\om}(r)\le C\widehat{\om}(\frac{1+r}{2})$ is satisfied for all $0\le r<1$. Moreover, if there exist $K=K(\om)>1$ and $C=C(\om)>1$ such that
    \begin{equation}\label{K}
    \widehat{\om}(r)\ge C\widehat{\om}\left(1-\frac{1-r}{K}\right),\quad 0\le r<1,
    \end{equation}
then we write $\om\in\Dd$. The intersection $\DD\cap\Dd$ is denoted by $\DDD$.
The definitions of these classes of weights are of geometric nature, and the classes themselves arise naturally in the study of
classical operators. For example, it is known that the Bergman projection $P_\om$ induced by a radial weight $\om$ is bounded from $L^\infty$ of $\D$ to the Bloch space $\mathcal{B}$ if and only if $\om\in\DD$, and further, $P_\om:L^\infty\to\mathcal{B}$ is bounded and onto if and only if $\om\in\DDD$~\cite{PR2018}. A radial weight $\om$ is regular if $\widehat{\om}(r)\asymp\om(r)(1-r)$ for all $0\le r<1$. The class of regular weights is denoted by $\R$,
and $\R\subsetneq\DDD$. For basic properties of these classes of weights and more, see~\cite{PelSum14,PR2014Memoirs,PR2018} and the references therein.

This paper concerns the radial (Hardy) averaging operator $T_\om$, induced by a radial weight~$\om$, and defined by
  $$
  T_\om(f)(z)=\frac{\int_{|z|}^1f\left(s\frac{z}{|z|}\right)\om(s)\,ds}{\widehat{\om}(z)},\quad z\in\D\setminus\{0\},
  $$
and its maximal version
	$$
  T^N_\om(f)(z)=\frac{\int_{|z|}^1N(f)\left(s\frac{z}{|z|}\right)\om(s)\,ds}{\widehat{\om}(z)},\quad z\in\D\setminus\{0\},
  $$
where $N(f)(z)=\sup_{\zeta\in\Gamma(z)}|f(\zeta)|$ is the maximal function on the
non-tangential approach region
		$$
		\Gamma(z)=\left\{\xi\in\D:|\theta-\arg(\xi)|<\frac{1}{2}\left(1-\frac{|\xi|}{r}\right )\right\},
		\quad z=re^{i\theta}\in\overline{\D}\setminus\{0\},
    $$
with vertex in the punctured closed unit disc.

Hardy averaging operators have been extensively studied since decades ago in the context of Lebesgue spaces, see
\cite{AndersenMuckStudia82,KPB,DMROIndiana12,GoLa,Muckenhoupt1972,SawyerTams84} and the references therein. The question of when $T_\om:L^p_\nu\to L^p_\eta$ is bounded can be answered  by a classical result due to Muckenhoupt~\cite{Muckenhoupt1972}. Namely, the condition
	$$
    M_p
		=M_p(\om,\nu,\eta)
		=\sup_{0<r<1}\left(\int_r^1\left(\frac{\om(s)}{s\nu(s)}\right)^{p'}s\nu(s)\,ds\right)^\frac1{p'}
    \left(\int_0^r\frac{\eta(s)s}{\widehat{\om}(s)^p}\,ds\right)^\frac1p<\infty
    $$
being a sufficient condition for $T_\om:L^p_\nu\to L^p_\eta$ to be bounded and an estimate from above for the operator norm follow by applying \cite[Theorem~2]{Muckenhoupt1972}, and a testing similar to that in the proof of the said theorem gives the necessity and a lower bound for the operator norm. We state this result for further reference as follows.

\begin{lettertheorem}\label{Theorem:T_w-bounded-L^p-case}
Let $1<p<\infty$ and $\om$, $\nu$ and $\eta$ radial weights. Then $T_\om:L^p_\nu\to L^p_\eta$ is bounded if and only if $M_p(\om,\nu,\eta)<\infty$. Moreover, $M_p\le\|T_\om\|_{L^p_\nu\to L^p_\eta}\le p{(p-1)^\frac{1-p}{p}}M_p$.
\end{lettertheorem}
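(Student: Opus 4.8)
The plan is to strip off the angular variable, turning the claim into a one-dimensional weighted Hardy inequality, and then to quote \cite[Theorem~2]{Muckenhoupt1972}.

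\emph{Reduction to one dimension.} Writing $z=re^{i\theta}$ and, for fixed $\theta$, setting $f_\theta(s)=f(se^{i\theta})$, the definition of $T_\om$ becomes
\[
T_\om(f)(re^{i\theta})=\frac{1}{\widehat\om(r)}\int_r^1 f_\theta(s)\om(s)\,ds=:H_\om(f_\theta)(r).
\]
Since $\nu$ and $\eta$ are radial, Fubini's theorem gives $\|T_\om(f)\|_{L^p_\eta}^p=\frac1\pi\int_0^{2\pi}\big(\int_0^1|H_\om(f_\theta)(r)|^p\eta(r)r\,dr\big)\,d\theta$ and the analogous identity for $\|f\|_{L^p_\nu}^p$. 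Hence $T_\om:L^p_\nu\to L^p_\eta$ is bounded with norm at most $C$ if and only if
\[
\int_0^1|H_\om(g)(r)|^p\eta(r)r\,dr\le C^p\int_0^1|g(r)|^p\nu(r)r\,dr
\]
for every measurable $g$ on $(0,1)$; the necessity of the one-dimensional inequality, with the \emph{same} constant $C$, follows by testing with radial $f$ (for which $f_\theta=g$ for all $\theta$), so the two sharp constants coincide.

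\emph{Canonical form and Muckenhoupt's theorem.} The substitution $\phi=g\om$ rewrites the one-dimensional inequality as
\[
\int_0^1\Big|\int_r^1\phi(s)\,ds\Big|^p\,\frac{\eta(r)r}{\widehat\om(r)^p}\,dr\le C^p\int_0^1|\phi(r)|^p\,\frac{\nu(r)r}{\om(r)^p}\,dr,
\]
which is the weighted inequality for the conjugate Hardy operator $\phi\mapsto\int_{\cdot}^{1}\phi$ on $(0,1)$ with weights $U(r)=\eta(r)r\,\widehat\om(r)^{-p}$ and $V(r)=\nu(r)r\,\om(r)^{-p}$. Transporting \cite[Theorem~2]{Muckenhoupt1972} from $(0,\infty)$ to $(0,1)$ via $r\mapsto 1-r$, this inequality holds for all $\phi$ if and only if $B:=\sup_{0<r<1}\big(\int_0^rU\big)^{1/p}\big(\int_r^1V^{1-p'}\big)^{1/p'}<\infty$, and then $B\le C\le p^{1/p}(p')^{1/p'}B$, where $p'=p/(p-1)$. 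Using the identity $-p(1-p')=p'$ one computes $\int_r^1V^{1-p'}=\int_r^1\big(\tfrac{\om(t)}{t\nu(t)}\big)^{p'}t\nu(t)\,dt$ and $\int_0^rU=\int_0^r\tfrac{\eta(t)t}{\widehat\om(t)^p}\,dt$, so $B=M_p(\om,\nu,\eta)$; finally $p^{1/p}(p')^{1/p'}=p(p-1)^{(1-p)/p}$, which yields exactly the asserted bounds.

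\emph{Main obstacle.} There is no deep difficulty: the statement is essentially a restatement of \cite[Theorem~2]{Muckenhoupt1972}. The one point requiring care is that $g\mapsto g\om$ is a bijection only modulo the zero set of $\om$, so the reduction above delivers the sufficiency and the upper bound $\|T_\om\|\le p(p-1)^{(1-p)/p}M_p$ directly, whereas for the necessity and the lower bound $M_p\le\|T_\om\|$ I would argue by direct testing, in the spirit of the proof of \cite[Theorem~2]{Muckenhoupt1972}: for fixed $0<r<1$ insert the (truncated) radial function $z\mapsto\big(\om(|z|)(|z|\nu(|z|))^{-1}\big)^{p'-1}\mathbf{1}_{\{r<|z|<1\}}$ into the boundedness hypothesis, estimate $\widehat\om$ from below on the relevant annulus, and let the truncation tend to its limit. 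Beyond this, only the joint measurability of $(\theta,s)\mapsto f_\theta(s)$ and the almost-everywhere finiteness of $T_\om(f)$ for $f\in L^p_\nu$ when $M_p<\infty$ need checking, and these follow from standard arguments (Fubini's theorem, respectively Hölder's inequality together with $M_p<\infty$).
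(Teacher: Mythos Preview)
Your proposal is correct and follows essentially the same approach the paper indicates: the paper does not give a detailed proof of Theorem~A but simply states that sufficiency and the upper norm bound follow by applying \cite[Theorem~2]{Muckenhoupt1972}, while necessity and the lower bound come from ``a testing similar to that in the proof of the said theorem.'' Your reduction to the one-dimensional conjugate Hardy inequality via polar coordinates and the substitution $\phi=g\om$, together with the direct testing you describe for the lower bound, are precisely the details behind that sketch; your observation that the substitution is only a bijection modulo the zero set of $\om$, and that one must therefore test directly for necessity, is the one genuine subtlety, and you handle it correctly.
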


It is worth noticing that under the hypotheses $\om,\nu\in\R$ and $\eta\in\Dd$ the averaging operator~$T_\om$ and
the maximal Bergman projection
		$$
    P^+_\om(f)(z) = \int_\D f(\zeta)|B^\om_z(\zeta)|\om(\zeta)\,dA(\zeta),\quad z\in\D,
    $$
are simultaneously bounded from $L^p_\nu$ to $L^p_\eta$ by \cite{PKRthree}.
Moreover, in the particular case $\om\equiv1$ with $\nu\in\R$, $T_\om:A^p_\nu\to L^p_\nu$ is bounded if and only if the Bergman projection~$P_\om$ is bounded on~$L^p_\nu$~\cite{PGR}. Therefore the averaging operator $T_\omega$, which is interesting in its own right, is closely related to
the weighted Bergman projection $P_\om$  and it might be further used as a model for its study.
With this aim and that of studying the difference respect to their action on Lebesgue spaces, we consider the averaging operator $T_\om$ acting from $ A^p_\nu$ to $L^p_\nu$ when $\nu\in\DD$.

We recall that a positive Borel measure $\mu$ on $\D$ is a $q$-Carleson measure for $A^p_\nu$ if the identity operator is bounded from $A^p_\nu$ to $L^q_\mu$. The $q$-Carleson measures for $A^p_\nu$ induced by $\nu\in\DD$ are described in \cite{PelRat2014}. By denoting
    $$
    d\mu_{p,\om,\nu}(z)=\widehat{\om}(z)^{p-1}\om(z)\left(\int_0^{|z|}\frac{\nu(s)}{\widehat{\om}(s)^p}s\,ds\right)dA(z),\quad z\in\D,
    $$
we state the first main result of the paper as follows.

\begin{theorem}\label{Theorem:T_w-bounded-intro}
Let $0<p<\infty$, $\nu\in\DD$ and $\om$ a radial weight. Then the following statements are equivalent:
\begin{itemize}
\item[\rm(i)] $T_\om^N:A^p_\nu\to L^p_\nu$ is bounded;
\item[\rm(ii)] $T_\om:A^p_\nu\to L^p_\nu$ is bounded;
\item[\rm(iii)] $\DD_p(\om,\nu)=\sup_{0\le r<1}\frac{\widehat{\om}(r)^p}{\int_{r}^1 s\nu(s)\,ds}\int_0^r\frac{t\nu(t)}{\widehat{\om}(t)^p}\,dt<\infty$;
\item[\rm(iv)] $\mu_{p,\om,\nu}$ is a $q$-Carleson measure for $A^q_\nu$ for some (equivalently for each) $0<q<\infty$.
\end{itemize}
Moreover,
    \begin{equation}\label{Eq:norm-estimates}
    \|T_\om^N\|^p_{A^p_\nu\to L^p_\nu}\asymp\|T_\om\|^p_{A^p_\nu\to L^p_\nu}\asymp\|Id\|^p_{A^p_\nu\to L^p_{\mu_{p,\om,\nu}}}\asymp\DD_p(\om,\nu).
    \end{equation}
\end{theorem}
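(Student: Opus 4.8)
The plan is to establish the implications (i)$\Rightarrow$(ii)$\Rightarrow$(iii)$\Leftrightarrow$(iv)$\Rightarrow$(i) while tracking constants, so that \eqref{Eq:norm-estimates} follows. The implication (i)$\Rightarrow$(ii) is immediate: for $f\in H(\D)$ and $0<|w|<1$ the punctured radial segment $\{\rho w/|w|:0<\rho<|w|\}$ lies in $\Gamma(w)$, so $|f(w)|\le N(f)(w)$ and hence $|T_\om(f)(z)|\le T_\om^N(f)(z)$ on $\D\setminus\{0\}$, giving $\|T_\om\|_{A^p_\nu\to L^p_\nu}\le\|T_\om^N\|_{A^p_\nu\to L^p_\nu}$. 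It is worth recording at once that, for each fixed $\theta$, the function $s\mapsto N(f)(se^{i\theta})$ is non-decreasing on $[0,1)$, since $\Gamma(\rho e^{i\theta})\supseteq\Gamma(re^{i\theta})$ whenever $\rho\ge r$; this monotonicity is what makes the radial averaging tractable on the range $N(H(\D))$.

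For (iii)$\Leftrightarrow$(iv), and the estimate $\|Id\|^p_{A^p_\nu\to L^p_{\mu_{p,\om,\nu}}}\asymp\DD_p(\om,\nu)$, I would use the description of $q$-Carleson measures for $A^q_\nu$, $\nu\in\DD$, from \cite{PelRat2014}: since $\mu_{p,\om,\nu}$ is radial, that description reduces, for every $0<q<\infty$ (hence the ``some/each'' in (iv)), to $\sup_{0\le t<1}\mu_{p,\om,\nu}(\{t\le|z|<1\})\big/\int_t^1 s\nu(s)\,ds<\infty$. Using $\tfrac{d}{d\rho}\widehat{\om}(\rho)^p=-p\,\widehat{\om}(\rho)^{p-1}\om(\rho)$ and integration by parts one gets
\[
\mu_{p,\om,\nu}\big(\{t\le|z|<1\}\big)\asymp\widehat{\om}(t)^p\int_0^t\frac{s\nu(s)}{\widehat{\om}(s)^p}\,ds+\int_t^1 s\nu(s)\,ds-\lim_{\rho\to1^-}\widehat{\om}(\rho)^p\int_0^\rho\frac{s\nu(s)}{\widehat{\om}(s)^p}\,ds ,
\]
and one checks that the limit term is harmless: it vanishes when $\DD_p(\om,\nu)<\infty$, while if $\DD_p(\om,\nu)=\infty$ the first two terms already violate the radial Carleson condition (and if the limit is $+\infty$ then $\mu_{p,\om,\nu}$ is not even finite, hence not a $q$-Carleson measure). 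This gives both directions with comparable constants.

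For (ii)$\Rightarrow$(iii) I would test $T_\om$ on $f_t(z)=\big((1-t)/(1-tz)\big)^{\gamma}$, $0<t<1$, with $\gamma=\gamma(p,\nu)$ large; since $\nu\in\DD$ one has the standard estimate $\|f_t\|^p_{A^p_\nu}\asymp(1-t)\,\widehat{\nu}(t)\asymp(1-t)\int_t^1 s\nu(s)\,ds$. For the lower bound on $\|T_\om(f_t)\|_{L^p_\nu}$, fix $0<r<t$ and observe that $\theta\mapsto T_\om(f_t)(re^{i\theta})$ consists of the boundary values of $w\mapsto H_{r,t}(w)=\widehat{\om}(r)^{-1}(1-t)^{\gamma}\int_r^1\om(s)(1-tsw)^{-\gamma}\,ds$, analytic on $\{|w|<1/t\}$; the Hardy–space growth estimate then gives $\int_0^{2\pi}|T_\om(f_t)(re^{i\theta})|^p\,d\theta\gtrsim(1-t)\,|H_{r,t}(t)|^p$. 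Since $H_{r,t}(t)=\widehat{\om}(r)^{-1}(1-t)^{\gamma}\int_r^1\om(s)(1-t^2s)^{-\gamma}\,ds>0$ and the integrand here is $\asymp(1-t)^{-\gamma}$ on $[t,1]\subseteq[r,1]$, one obtains $H_{r,t}(t)\gtrsim\widehat{\om}(t)/\widehat{\om}(r)$; integrating in $r\in(0,t)$ yields $\|T_\om(f_t)\|^p_{L^p_\nu}\gtrsim(1-t)\,\widehat{\om}(t)^p\int_0^t s\nu(s)\widehat{\om}(s)^{-p}\,ds$. Dividing by $\|f_t\|^p_{A^p_\nu}$ and taking the supremum over $t$ gives $\|T_\om\|^p_{A^p_\nu\to L^p_\nu}\gtrsim\DD_p(\om,\nu)$.

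The core is (iv)$\Rightarrow$(i), which I would obtain from the pointwise-in-$f$ equivalence $\|T_\om^N f\|^p_{L^p_\nu}\asymp\int_\D N(f)^p\,d\mu_{p,\om,\nu}$ (valid for all radial $\om$, with $\nu\in\DD$): granting (iv), a $q$-Carleson measure for $A^q_\nu$ also controls the non-tangential maximal function, $\int_\D N(f)^q\,d\mu\lesssim\|Id\|^q_{A^q_\nu\to L^q_\mu}\|f\|^q_{A^q_\nu}$ (e.g. by dominating $N(f)$ pointwise by a Bergman-type maximal function or by covering the level sets of $N(f)$ with Carleson squares), and hence the right-hand side is $\lesssim\DD_p(\om,\nu)\,\|f\|^p_{A^p_\nu}$. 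The displayed equivalence is where the real work is. For $p=1$ it is Fubini's theorem. For general $p$, integrating in $\theta$ reduces it to the one-dimensional statement that, for non-decreasing $g\ge0$ (to be applied with $g(s)=N(f)(se^{i\theta})$, non-decreasing by the first paragraph),
\[
\int_0^1\Big(\tfrac{1}{\widehat{\om}(r)}\int_r^1 g(s)\om(s)\,ds\Big)^p\nu(r)r\,dr\ \asymp\ \int_0^1 g(s)^p\,\widehat{\om}(s)^{p-1}\om(s)\Big(\int_0^s\tfrac{t\nu(t)}{\widehat{\om}(t)^p}\,dt\Big)s\,ds .
\]
The ``$\gtrsim$'' half follows from $T_\om g\ge g$ and a layer-cake/monotonicity argument; the ``$\lesssim$'' half is a weighted Hardy inequality, which I would prove by discretising with $r_0=0$, $\widehat{\om}(r_{j+1})=\tfrac12\widehat{\om}(r_j)$ (so that $\widehat{\om}\asymp\widehat{\om}(r_j)$ and $\int_{\Delta_j}\om\asymp\widehat{\om}(r_j)$ on $\Delta_j=\{r_j\le|z|<r_{j+1}\}$): monotonicity of $g$ makes $T_\om g$ non-decreasing and reduces the claim to a discrete weighted inequality for the one-sided exponential average $\{a_j\}\mapsto\{\sum_{i\ge0}2^{-i}a_{j+i}\}$, with $a_j=\sup_{[r_j,r_{j+1})}g$, solvable by reducing to the classical Hardy inequality. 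The delicate points—and the main obstacle—are keeping the weight sharp (the naive Hölder bound loses a factor), treating $p\ge1$ and $0<p<1$ separately, and reconciling the $\widehat{\om}$-adapted partition $\{r_j\}$ with the Euclidean scale $1-r$ that governs subharmonicity and the apertures of the $\Gamma(z)$, so that the discrete quantities genuinely recover $\int_\D N(f)^p\,d\mu_{p,\om,\nu}$. Finally, the chain $\DD_p(\om,\nu)\lesssim\|T_\om\|^p_{A^p_\nu\to L^p_\nu}\le\|T_\om^N\|^p_{A^p_\nu\to L^p_\nu}\lesssim\DD_p(\om,\nu)$ together with $\DD_p(\om,\nu)\asymp\|Id\|^p_{A^p_\nu\to L^p_{\mu_{p,\om,\nu}}}$ yields \eqref{Eq:norm-estimates}.
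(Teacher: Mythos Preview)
Your handling of (i)$\Rightarrow$(ii), (iii)$\Leftrightarrow$(iv) and the norm comparison for the Carleson measure is essentially the same as the paper's. Your (ii)$\Rightarrow$(iii) is genuinely different and more elementary: the paper tests on derivatives $(B^\nu_a)^{(N)}$ of Bergman kernels and needs a separate mean-estimate lemma (proved via Hardy--Littlewood coefficient inequalities and Hadamard-block arguments), whereas your use of $f_t(z)=\big((1-t)/(1-tz)\big)^\gamma$ together with the $H^p$ pointwise bound $|H(t)|^p(1-t)\lesssim\|H\|_{H^p}^p$ gives the same conclusion with less machinery. The trade-off is that the paper's kernel approach simultaneously yields the sharper necessary condition for $T_\om:A^p_\nu\to L^q_\eta$, $q>p$, which your test functions do not reach.

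The real issue is your (iv)$\Rightarrow$(i) for $p>1$. The claimed ``pointwise-in-$f$ equivalence'' $\|T_\om^N f\|_{L^p_\nu}^p\asymp\int_\D N(f)^p\,d\mu_{p,\om,\nu}$ is not established by what you write, and the $\lesssim$ half is exactly where the difficulty lies. Your own remark that ``the naive H\"older bound loses a factor'' is the point: H\"older with the obvious splitting gives $\|T_\om^N f\|_{L^p_\nu}^p\lesssim\int_\D N(f)^p\,\om\,\widetilde I\,dA$ with $\widetilde I(s)=\int_0^s\nu(r)r/\widehat\om(r)\,dr$, and since $\widetilde I(s)\ge\widehat\om(s)^{p-1}I(s)$ this is the \emph{wrong} direction compared with $d\mu_{p,\om,\nu}=\widehat\om^{p-1}\om I\,dA$. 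Your proposed fix---discretise along $\widehat\om(r_{j+1})=\tfrac12\widehat\om(r_j)$ and reduce to a ``classical Hardy inequality'' for the exponential average $A_j=\sum_{i\ge0}2^{-i}a_{j+i}$---does not go through as stated: the required bound $\sum_jA_j^p\nu_j\lesssim\sum_ja_j^p\sum_{k<j}2^{-(j-k)p}\nu_k$ is \emph{false pointwise in $k$} (take $a_j$ constant), it is not the classical Hardy/Copson inequality, and exploiting monotonicity of $\{a_j\}$ here is a genuine step you have not carried out. The paper handles $p>1$ by a different mechanism you do not mention: it first proves that the condition $\DD_p(\om,\nu)<\infty$ is \emph{self-improving}, namely $\DD_{p-\varepsilon}(\om,\nu)\lesssim\DD_p(\om,\nu)$ for some $\varepsilon=\varepsilon(p,\om,\nu)>0$, and then applies H\"older with the auxiliary weight $h=\om^{1/p'}\widehat\om^{(-p+1+\varepsilon)/p}$ to obtain $\|T_\om^N f\|_{L^p_\nu}^p\lesssim\int_\D N(f)^p\,d\mu_{p-\varepsilon,\om,\nu}$; the right-hand side is then controlled because $\mu_{p-\varepsilon,\om,\nu}$ is still a Carleson measure. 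That $\varepsilon$-gain is precisely what repairs the loss you identified, and without it (or a full proof of your discrete inequality for monotone sequences) the implication (iv)$\Rightarrow$(i) is incomplete for $p>1$.
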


To obtain the necessary condition $\DD_p(\om,\nu)<\infty$, we observe first that indicators cannot be used to build test functions, as it happens in Theorem~\ref{Theorem:T_w-bounded-L^p-case}. This is a significant difference between the analytic and the classical case. We overcome this obstacle by using derivatives of Bergman reproducing kernels $B^\nu_a$ as test functions. The asymptotic behavior of $\|(B^\nu_a)^{(N)}\|_{A^p_\nu}$ is well understood by the recent study~\cite{PelRatKernels} and these estimates are strongly in use in our reasoning. In addition, we need lower estimates for the $L^p_\nu$-norm of $T_\om\left(\left(B_a^\nu\right)^{(N)}\right)$. We prove these estimates by using Hardy-Littlewood inequalities and smooth polynomials related to Hadamard products. This approach actually enables us to obtain a sharp necessary condition for the boundedness in a much more general situation, see Proposition~\ref{Theorem:T_w-boundedpq3 weights} below. As an immediate consequence of these deductions we observe that $T_\om:L^p_\nu\to L^q_\nu$ fails to be bounded if $q>p$ and $\nu\in\DD$. Another pivotal dissimilarity between the analytic and the classical case consists of finding an appropriate way to obtain a sufficient condition in the former one. It turns out that Carleson measures is an adequate one as is already seen in the statement. We will also show that the condition $\DD_p(\om,\nu)<\infty$ is self-improving in the sense that if $\DD_p(\om,\nu)<\infty$, then there exists $\e=\e(p,\om,\nu)>0$ such that $\DD_{p-\e}(\om,\nu)<\infty$, and this observation will play an important role in the proof.

By using Theorems~\ref{Theorem:T_w-bounded-intro} and \ref{Theorem:T_w-bounded-L^p-case} along with~\cite[Theorem 11]{PelRatKernels}, we establish the following result.

\begin{corollary}\label{corollary:A^p-L^p-the same}
Let $1<p<\infty$ and $\om,\nu\in\R$. Then the following statements are equivalent:
    \begin{enumerate}
    \item[\rm(i)] $T_\om:A^p_\nu\to L^p_\nu$ is bounded; 
    \item[\rm(ii)] $T_\om:L^p_\nu\to L^p_\nu$ is bounded;
    \item[\rm(iii)] $P^+_\om:L^p_\nu\to L^p_\nu$ is bounded;
    \item[\rm(iv)] $P_\om:L^p_\nu\to L^p_\nu$ is bounded.
    \end{enumerate}
\end{corollary}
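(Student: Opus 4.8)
The plan is to prove the chain of equivalences by going through the characterizations already available in the excerpt rather than treating all four conditions independently. The key observation is that Corollary~\ref{corollary:A^p-L^p-the same} is a genuine corollary: the implications (iii)$\Leftrightarrow$(iv) and the equivalence of these with $T_\om:L^p_\nu\to L^p_\nu$ being bounded (i.e.\ with $M_p(\om,\nu,\nu)<\infty$ by Theorem~\ref{Theorem:T_w-bounded-L^p-case}) should come from the cited \cite[Theorem~11]{PelRatKernels} together with the description of $q$-Carleson measures for $A^p_\nu$ from \cite{PelRat2014}. So the real content to assemble here is (i)$\Leftrightarrow$(ii), and for that I would exploit Theorem~\ref{Theorem:T_w-bounded-intro}. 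Since $\om,\nu\in\R\subset\DDD\subset\DD$, Theorem~\ref{Theorem:T_w-bounded-intro} applies and tells us that (i) in the Corollary is equivalent to $\DD_p(\om,\nu)<\infty$; and (ii) is, by Theorem~\ref{Theorem:T_w-bounded-L^p-case}, equivalent to $M_p(\om,\nu,\nu)<\infty$. Hence the whole Corollary reduces to the single analytic statement
\[
\DD_p(\om,\nu)<\infty \quad\Longleftrightarrow\quad M_p(\om,\nu,\nu)<\infty,
\]
valid for $\om,\nu\in\R$ and $1<p<\infty$, together with the (presumably known) equivalence of $M_p(\om,\nu,\nu)<\infty$ with the boundedness of $P^+_\om$ and of $P_\om$ on $L^p_\nu$.

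First I would unwind the two quantities under the regularity hypothesis. For $\nu\in\R$ one has $\widehat{\nu}(r)\asymp\nu(r)(1-r)$ and $\int_r^1 s\nu(s)\,ds\asymp\widehat{\nu}(r)$, and similarly $\om\in\R$ gives $\om(s)\asymp\widehat{\om}(s)/(1-s)$. Substituting these into $M_p(\om,\nu,\nu)$, the factor $\bigl(\int_r^1(\om(s)/(s\nu(s)))^{p'}s\nu(s)\,ds\bigr)^{1/p'}$ becomes, after using $\om/\nu\asymp \widehat{\om}/\widehat{\nu}$ and the regularity of both weights, comparable to $\widehat{\om}(r)\,\widehat{\nu}(r)^{-1/p}(1-r)^{-1/p'}\cdot(\,\cdot\,)$ — more precisely one evaluates $\int_r^1 (\widehat{\om}(s)/\widehat{\nu}(s))^{p'}\,\nu(s)^{1-p'}s^{1-p'}\,ds$ and shows, via the standard Bergman-weight estimates for integrals of powers of $\widehat{\om}$ and $\widehat{\nu}$ against such densities (the kind of computations in \cite{PelSum14,PR2014Memoirs}), that it is $\asymp \widehat{\om}(r)^{p'}\,\widehat{\nu}(r)^{-p'/p}\,(1-r)^{1-p'}$ up to constants. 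Raising to the power $1/p'$ and multiplying by the second factor $\bigl(\int_0^r \nu(s)s/\widehat{\om}(s)^p\,ds\bigr)^{1/p}$ then yields
\[
M_p(\om,\nu,\nu)^p \asymp \sup_{0\le r<1}\frac{\widehat{\om}(r)^p}{\widehat{\nu}(r)}\int_0^r\frac{\nu(s)s}{\widehat{\om}(s)^p}\,ds,
\]
and since $\widehat{\nu}(r)\asymp\int_r^1 s\nu(s)\,ds$ the right side is exactly $\DD_p(\om,\nu)$. This is the equivalence we wanted, and it also makes the norm comparison quantitative.

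The main obstacle will be the careful bookkeeping in that substitution: the exponent $p'-1$ appearing on the density $s\nu(s)$ has to be tracked so that the powers of $(1-r)$ cancel correctly, and one needs the two-sided estimates for $\int_r^1 \widehat{\om}(s)^a\widehat{\nu}(s)^b\,\om(s)/\widehat{\om}(s)\cdot(\,\cdots)\,ds$-type integrals that hold precisely because both weights are regular (these give $\int_r^1$ of a power of $\widehat{\om}$ times $\om$ comparable to that power of $\widehat{\om}(r)$ when the exponent is positive, and analogously for the $\int_0^r$ integral). I would isolate this as a short lemma of the form: for $\om,\nu\in\R$ and $1<p<\infty$, $M_p(\om,\nu,\nu)^p\asymp\DD_p(\om,\nu)$. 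Once that lemma is in place, the Corollary follows immediately: (i)$\Leftrightarrow$(iii) of Theorem~\ref{Theorem:T_w-bounded-intro} gives (i)$\Leftrightarrow\DD_p<\infty$; the lemma gives $\DD_p<\infty\Leftrightarrow M_p<\infty$; Theorem~\ref{Theorem:T_w-bounded-L^p-case} gives $M_p<\infty\Leftrightarrow$ (ii); and \cite[Theorem~11]{PelRatKernels} (together with the standard fact that boundedness of $P^+_\om$ implies that of $P_\om$, with the converse here coming from the cited characterization) closes the loop (ii)$\Leftrightarrow$(iii)$\Leftrightarrow$(iv).
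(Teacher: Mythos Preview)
Your proposal is correct and follows essentially the same route as the paper: the paper's entire proof is the single sentence ``Since $\om,\nu\in\R$, the result follows joining \cite[Theorem~11]{PelRatKernels}, Theorem~\ref{Theorem:T_w-bounded-L^p-case} and Theorem~\ref{Theorem:T_w-bounded-intro},'' and you have identified precisely these three ingredients and explained how they fit together.

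One remark on the extra work you do. Your sketch of the unconditional asymptotic $M_p(\om,\nu,\nu)^p\asymp\DD_p(\om,\nu)$ for all $\om,\nu\in\R$ is not quite airtight as written: the claimed estimate
\[
\int_r^1\left(\frac{\om(s)}{s\nu(s)}\right)^{p'}s\nu(s)\,ds\;\asymp\;\int_r^1\frac{\widehat{\om}(s)^{p'-1}}{\widehat{\nu}(s)^{p'-1}}\,\om(s)\,ds\;\asymp\;\frac{\widehat{\om}(r)^{p'}}{\widehat{\nu}(r)^{p'-1}}
\]
does not hold for \emph{arbitrary} pairs of regular weights without additional input (the ratio $\widehat{\om}/\widehat{\nu}$ need not be essentially monotone). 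This is harmless for the corollary, since (ii)$\Rightarrow$(i) is trivial via $A^p_\nu\subset L^p_\nu$, so you only need one direction, and in that direction you may use the hypothesis $\DD_p(\om,\nu)<\infty$; alternatively, and this is presumably what the paper intends, the cited \cite[Theorem~11]{PelRatKernels} already characterizes the boundedness of $P_\om$ (and $P_\om^+$) on $L^p_\nu$ for $\om,\nu\in\R$ by a condition that matches $\DD_p(\om,\nu)<\infty$ directly, so no separate comparison $M_p^p\asymp\DD_p$ is needed. Either way, your outline reaches the goal.
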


It is not a surprise that the statement in Corollary~\ref{corollary:A^p-L^p-the same} fails without any local regularity hypotheses on the weights because $\om$ being absolutely continuous with respect to $\nu$ is a necessary condition for $T_\om$ to be bounded on $L^p_\nu$ by Theorem~\ref{Theorem:T_w-bounded-L^p-case},
see Corollary~\ref{counterexample} below.

We also consider the $(p,p)$-weak type inequality for the radial averaging operator. Our main result in this direction is the following.

\begin{theorem}\label{Theorem:T_w-bounded-weak-L^p-case}
Let $1<p<\infty$ and $\om,\nu,\eta$ be radial weights.
Then the following statements are equivalent:
    \begin{enumerate}
    \item[\rm(i)] $T_\om:L^p_\nu\to L^{p,\infty}_\eta$ is bounded;
    \item [\rm(ii)]
        $\displaystyle
        N_{p}(\om,\nu,\eta)
				=\sup_{0\le t<r<1}\left(\frac{\int_t^r\eta(s)s\,ds}{\widehat{\om}(t)^p}\right)^\frac1p
        \left(\int_r^1\left(\frac{\om(s)}{s\nu(s)}\right)^{p'}s\nu(s)\,ds\right)^{\frac1{p'}}<\infty;
        $
    \item[\rm(iii)]
    $\displaystyle
    M_{p,\e}
		=M_{p,\e}(\om,\nu,\eta)
    =\sup_{0\leq r<1}\left(\widehat{\om}(r)^\e\int_0^r \frac{s\eta(s)}{\widehat{\om}(s)^{p+\ep}}\,ds\right)^\frac1p
    \left(\int_r^1 \left(\frac{\om(s)}{s\nu(s)}\right)^{p'}s\nu(s)\,ds\right)^{\frac1{p'}}<\infty
    $
for some (equivalently for each) $\e>0$.
    \end{enumerate}
Moreover, for each fixed $\ep>0$,
    \begin{equation}\label{eq:eqweaknorms}
    \|T_\om\|_{L^p_\nu\to L^{p,\infty}_\eta}\asymp N_{p}(\om,\nu,\eta)\asymp M_{p,\e}(\om,\nu,\eta).
    \end{equation}
\end{theorem}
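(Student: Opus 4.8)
The plan is to prove the chain $(i)\Rightarrow(ii)\Rightarrow(iii)\Rightarrow(i)$, keeping track of the constants so that the norm equivalence \eqref{eq:eqweaknorms} falls out along the way. For $(i)\Rightarrow(ii)$, I would test the weak type inequality on functions supported on a single radius interval. Fix $0\le t<r<1$ and, following the classical Muckenhoupt testing, choose $f_r(s)=\left(\frac{\om(s)}{s\nu(s)}\right)^{p'-1}\chi_{(r,1)}(s)$ radialized appropriately; then $\|f_r\|_{L^p_\nu}^p\asymp\int_r^1\left(\frac{\om(s)}{s\nu(s)}\right)^{p'}s\nu(s)\,ds$. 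For $|z|\le t$ one has $T_\om(f_r)(z)\ge\widehat{\om}(t)^{-1}\int_r^1 f_r(s)\om(s)\,ds=\widehat{\om}(t)^{-1}\int_r^1\left(\frac{\om(s)}{s\nu(s)}\right)^{p'}s\nu(s)\,ds$, a quantity constant in $z$ on the disc $|z|<t$; call it $\lambda_0$. Feeding $\lambda$ slightly below $\lambda_0$ into $(i)$ and estimating $\eta(\{|z|<t\})=\int_0^t\eta(s)s\,ds$ from below by $\int_t^r$... — more precisely one must pick the cutoff radii so that the sublevel set contains the annulus $\{t<|z|<r\}$ rather than just $\{|z|<t\}$; using $T_\om(f_r)(z)\ge\widehat{\om}(|z|)^{-1}\int_r^1 f_r\om\,ds$ for $|z|<r$ together with the monotonicity of $\widehat\om$ gives $T_\om(f_r)(z)\ge\widehat{\om}(t)^{-1}\int_r^1 f_r\om\,ds$ on all of $\{|z|<r\}$, and then $(i)$ yields $\int_t^r\eta(s)s\,ds\lesssim\lambda^{-p}\|f_r\|_{L^p_\nu}^p$ with $\lambda$ comparable to $\widehat\om(t)^{-1}\int_r^1 f_r\om\,ds$. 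Rearranging gives exactly $N_p(\om,\nu,\eta)\lesssim\|T_\om\|_{L^p_\nu\to L^{p,\infty}_\eta}$.

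For $(ii)\Rightarrow(iii)$, the content is the self-improvement of the Muckenhoupt-type supremum, analogous to the well-known fact that $A_p$-type conditions open up. Write $N=N_p(\om,\nu,\eta)$ and $\Psi(r)=\int_r^1\left(\frac{\om(s)}{s\nu(s)}\right)^{p'}s\nu(s)\,ds$. Condition $(ii)$ says $\int_t^r\eta(s)s\,ds\le N^p\widehat\om(t)^p\Psi(r)^{-p/p'}$ for all $t<r$; letting $r\to1^-$ (where $\Psi(r)\to0$, else $(ii)$ forces the whole thing to be trivial) shows $\int_t^1\eta(s)s\,ds$ is controlled, and more usefully, dyadic decomposition of $(0,r)$ against the levels of $\widehat\om$ lets one sum a geometric series: for $\e>0$ small,
    \begin{equation*}
    \widehat\om(r)^\e\int_0^r\frac{s\eta(s)}{\widehat\om(s)^{p+\e}}\,ds
    =\widehat\om(r)^\e\int_0^r\left(\int_0^s\eta(u)u\,du\right)'\frac{ds}{\widehat\om(s)^{p+\e}}+\text{(boundary term)},
    \end{equation*}
and an integration by parts plus the pointwise bound $\int_0^s\eta(u)u\,du\le\int_0^s\eta(u)u\,du$ — here one must be a little careful since $(ii)$ bounds an annular integral, not $\int_0^s$; the trick is to apply $(ii)$ with $r$ replaced by a point $\rho\in(s,r)$ chosen so that $\widehat\om(\rho)=\widehat\om(s)/2$ (or the supremum over such, via \eqref{K}-type reasoning is not needed here, merely continuity of $\widehat\om$), giving $\int_s^\rho\eta u\,du\le N^p\widehat\om(s)^p\Psi(\rho)^{-p/p'}\le N^p\widehat\om(s)^p\Psi(r)^{-p/p'}$, and then chaining these annuli from $s$ up to $r$ with geometrically decaying $\widehat\om$ to recover control of $\int_0^r\eta u\,du$ by $N^p\widehat\om(0)^p\Psi(r)^{-p/p'}$, losing only a constant depending on $p$. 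Plugging this into the weighted integral and using $\int_0^r\widehat\om(s)^{-(p+\e)}\,d(\text{stuff})$ with $\widehat\om$ decreasing, the factor $\widehat\om(r)^\e$ exactly absorbs the divergence, and one gets $M_{p,\e}\lesssim_{p,\e}N_p$. The reverse direction $M_{p,\e}\gtrsim$ something, i.e. that $(iii)$ does not depend on $\e$ and dominates — or rather, that $(iii)$ implies $(i)$ directly — is handled next, so $(ii)\Rightarrow(iii)$ needs only this one inequality.

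For $(iii)\Rightarrow(i)$, I would split $T_\om(f)(z)=\frac{1}{\widehat\om(|z|)}\int_{|z|}^1 f(s\frac{z}{|z|})\om(s)\,ds$ and, for fixed $\lambda$, decompose the integration in $s$ according to dyadic levels of $\widehat\om$, so that on the $k$-th piece $\widehat\om(s)\asymp 2^{-k}$. On each piece apply Hölder with exponents $p,p'$: the $\om$-part contributes $\left(\int\left(\frac{\om}{s\nu}\right)^{p'}s\nu\,ds\right)^{1/p'}$ over that level, which by $(iii)$ (after removing the $\widehat\om(r)^\e$ and $\widehat\om(s)^{p+\e}$ book-keeping) is bounded by $C\,2^{k}\left(\int_0^{r_k}s\eta(s)\,ds\right)^{-1/p}\cdot(\text{small factor})$ where $r_k$ marks the level; the $f$-part contributes a tail $\|f\|$-type quantity. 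Summing over $k$ — here the $\e>0$ in $M_{p,\e}$ provides the geometric decay that makes the sum converge — produces a pointwise estimate $|T_\om(f)(z)|\le C\,g(|z|)$ where $g$ is a decreasing function satisfying $\eta(\{s:g(s)\ge\lambda\})\lesssim\lambda^{-p}\|f\|_{L^p_\nu}^p$; equivalently, one shows the distribution function of $T_\om(f)$ with respect to $s\eta(s)\,ds$ is controlled directly. The cleanest route may be to note that $|T_\om(f)(z)|\ge\lambda$ forces $|z|$ to lie below some critical radius $r(\lambda)$ determined by a Hölder estimate $|T_\om(f)(z)|\le\widehat\om(|z|)^{-1}\|f\|_{L^p_\nu}\left(\int_{|z|}^1(\om/s\nu)^{p'}s\nu\,ds\right)^{1/p'}$, and then $\eta(\{|z|<r(\lambda)\})=\int_0^{r(\lambda)}s\eta(s)\,ds$, which $(iii)$ (with any fixed $\e$) bounds by $C\lambda^{-p}\|f\|_{L^p_\nu}^p$ after identifying $r(\lambda)$ as the place where $\widehat\om(r)^{-1}\Psi(r)^{1/p'}\asymp\lambda/\|f\|$.

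The main obstacle I anticipate is the self-improvement step $(ii)\Rightarrow(iii)$: turning the two-variable annular supremum $N_p$ into the single-variable condition $M_{p,\e}$ with an $\e$ gain. The delicate point is that $(ii)$ only bounds $\int_t^r\eta u\,du$, so one cannot directly substitute $\int_0^r$; the remedy is the chaining-over-geometric-levels-of-$\widehat\om$ argument sketched above, where continuity (and strict positivity) of $\widehat\om$ lets one choose intermediate radii halving $\widehat\om$, and the sum of the resulting geometric series in $\widehat\om(t)^p$ telescopes to give $\int_0^r\eta u\,du\lesssim\widehat\om(0)^p\Psi(r)^{-p/p'}$ — but getting the $\e$-improved weighted integral $\widehat\om(r)^\e\int_0^r s\eta(s)\widehat\om(s)^{-p-\e}\,ds$ to be finite requires a second, finer layering that trades a small power of $\widehat\om$ for convergence, and keeping the constants uniform in $\e$ (or at least tracking their $\e$-dependence) to get the asymptotic equivalence \eqref{eq:eqweaknorms} for each fixed $\e$ is where the bookkeeping is heaviest. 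The equivalence of the $M_{p,\e}$ for different $\e$ then follows formally once $(iii)\Rightarrow(i)\Rightarrow(ii)\Rightarrow(iii)$ is closed, since each $M_{p,\e}$ is then sandwiched between constants times $\|T_\om\|_{L^p_\nu\to L^{p,\infty}_\eta}$.
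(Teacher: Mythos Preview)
Your $(i)\Rightarrow(ii)$ is essentially the paper's testing argument and is fine.

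For $(ii)\Rightarrow(iii)$ you have over-estimated the difficulty and missed a clean trick. No dyadic chaining is needed: one simply integrates the pointwise bound $h(r)^p\int_t^r\eta(s)s\,ds\le N_p^p\,\widehat\om(t)^p$ from $(ii)$ against the density $\om(t)\widehat\om(t)^{-1-p-\e}\,dt$ over $t\in(0,r)$ and applies Fubini. Since $\int_0^r\om(t)\widehat\om(t)^{-1-\e}\,dt=\e^{-1}\bigl(\widehat\om(r)^{-\e}-\widehat\om(0)^{-\e}\bigr)$ and $\int_0^s\om(t)\widehat\om(t)^{-1-p-\e}\,dt=(p+\e)^{-1}\bigl(\widehat\om(s)^{-p-\e}-\widehat\om(0)^{-p-\e}\bigr)$, the inequality $M_{p,\e}\lesssim_{p,\e}N_p$ falls out in two lines, for every $\e>0$.

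The genuine gap is $(iii)\Rightarrow(i)$. Your ``cleanest route'' assumes $\{z:|T_\om(f)(z)|>\lambda\}$ is a disc $\{|z|<r(\lambda)\}$, but for fixed $\theta$ the map $r\mapsto T_\om(f)(re^{i\theta})$ is in general not monotone, so the radial level set is a finite union of intervals $\bigcup_k(a_k,b_k)$ (for compactly supported step $f$). Your H\"older bound, which in its sharp form reads $\widehat\om(r)^p\le(p')^{p-1}\lambda^{-p}h(r)^{p-1}\int_r^1 f^p\,h\,s\nu\,ds$, is exactly the paper's pointwise inequality on each such interval, but turning it into a bound on $\sum_k\int_{a_k}^{b_k}s\eta(s)\,ds$ is where the real work lies. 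The paper writes $\int_{a_k}^{b_k}s\eta(s)\,ds=\int_{a_k}^{b_k}\widehat\om(s)^{p+\e}\,dH(s)$ with $H(s)=\int_0^s t\eta(t)\widehat\om(t)^{-p-\e}\,dt$, integrates by parts twice, invokes $M_{p,\e}$ to bound $H$, and then controls the telescoping boundary terms across the gaps $[b_k,a_{k+1}]$ via the elementary inequality $a^{p+\e}-b^{p+\e}\le\frac{p+\e}{p}a^{\e}(a^p-b^p)$ for $0\le b\le a$. The parameter $\e$ is not used for geometric summation as you suggest; it is what makes the second integration by parts produce terms of the right sign. Your dyadic-in-$s$ sketch does not address this interval structure or the cancellation across gaps, and would not close the argument as written. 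Finally, the passage from step functions to general $f\in L^p_\nu$ is itself nontrivial here (Egorov-type approximation is needed twice), and is absent from your outline.
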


A calculation shows that Theorem~A can be deduced from the corresponding strong inequality for the simple case $\om\equiv1$ by modifying the weights involved. However, this is no longer true for weak-type inequalities due to the nature of the level sets for $|T_\om|$. On the other hand, although conditions similar to Theorem~\ref{Theorem:T_w-bounded-weak-L^p-case}(ii) appear frequently in the literature \cite{KPB,GoLa},
conditions analogous to Theorem~\ref{Theorem:T_w-bounded-weak-L^p-case}(iii), which is a kind of generalization of the classical
result of Andersen and Muckenhoupt \cite[Theorem~2]{AndersenMuckStudia82}, seem to have been less explored. Therefore, we believe that Part (iii) in Theorem~\ref{Theorem:T_w-bounded-weak-L^p-case} adds theoretical and practical value to the result. As for the proof of Theorem~\ref{Theorem:T_w-bounded-weak-L^p-case}, it follows the leading idea of that of \cite[Theorem~2]{AndersenMuckStudia82} but a good number of non-trivial modifications are needed.

It is also worth noticing that for each $1<p<\infty$ there are weights $(\om,\nu,\eta)$,  such that  $T_\om:L^p_\nu\to L^{p,\infty}_\eta$ is bounded but $T_\om:L^p_\nu\to L^p_\eta$ is not. Nonetheless, as a byproduct of our results we deduce the following.

\begin{corollary}\label{corollary:weird}
Let $1<p<\infty$, $\om\in\DD$ and $\nu\in\DDD$. Then $T_\om:L^p_\nu\to L^p_\nu$ bounded if and only if $T_\om:A^p_\nu\to L^p_\nu$ bounded and $T_\om:L^p_\nu\to L^{p,\infty}_\nu$ bounded.
\end{corollary}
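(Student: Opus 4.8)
The plan is to establish Corollary~\ref{corollary:weird} by combining Corollary~\ref{corollary:A^p-L^p-the same} with Theorems~\ref{Theorem:T_w-bounded-intro} and~\ref{Theorem:T_w-bounded-weak-L^p-case}, reducing the assertion to a purely analytic inequality between the relevant Muckenhoupt-type quantities. One implication is trivial: if $T_\om:L^p_\nu\to L^p_\nu$ is bounded, then restricting to analytic functions gives boundedness of $T_\om:A^p_\nu\to L^p_\nu$, and since $L^p_\nu\subset L^{p,\infty}_\nu$ with $\|\cdot\|_{L^{p,\infty}_\nu}\lesssim\|\cdot\|_{L^p_\nu}$ (Chebyshev), boundedness into $L^{p,\infty}_\nu$ follows as well. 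So the substance is the converse.

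For the converse, assume $T_\om:A^p_\nu\to L^p_\nu$ and $T_\om:L^p_\nu\to L^{p,\infty}_\nu$ are both bounded. By Theorem~\ref{Theorem:T_w-bounded-intro}(iii) the first hypothesis gives $\DD_p(\om,\nu)<\infty$, i.e.
    $$
    \sup_{0\le r<1}\frac{\widehat{\om}(r)^p}{\int_{r}^1 s\nu(s)\,ds}\int_0^r\frac{t\nu(t)}{\widehat{\om}(t)^p}\,dt<\infty,
    $$
while by Theorem~\ref{Theorem:T_w-bounded-weak-L^p-case}(ii) the second gives $N_p(\om,\nu,\nu)<\infty$. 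The target, by Theorem~\ref{Theorem:T_w-bounded-L^p-case}, is $M_p(\om,\nu,\nu)<\infty$. The factor $\left(\int_0^r\frac{s\nu(s)}{\widehat{\om}(s)^p}\,ds\right)^{1/p}$ appearing in $M_p$ is exactly the $t=0$ instance of the first factor in $N_p$ times $\widehat{\om}(0)/\widehat{\om}(r)$ restructured, but the second factor $\left(\int_r^1(\om/s\nu)^{p'}s\nu\,ds\right)^{1/p'}$ is common to $M_p$ and $N_p$; the point is to transfer the integral $\int_0^r$ in $M_p$ past $\widehat{\om}(r)^{-p}$ using $\DD_p$. Concretely, split $\int_0^r\frac{s\nu(s)}{\widehat{\om}(s)^p}\,ds$ dyadically in $\widehat{\om}$ or in $1-s$ (using that $\nu\in\DDD\subset\DD$ so $\widehat{\nu}$ is doubling and, via $\nu\in\Dd$, $\widehat{\om}(r)^p$ controls $\int_r^1 s\nu(s)\,ds$-type tails from below on a definite scale), and on each piece use the $N_p$-bound on the interval $[t,r]$ together with the reverse-doubling of $\nu$ to absorb the geometric series. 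The role of $\om\in\DD$ is to guarantee $\widehat{\om}(t)^p$ does not decay too fast along the chain of intervals so the sum telescopes.

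The main obstacle I expect is precisely this summation step: $N_p$ only controls $\int_t^r$ against $\widehat{\om}(t)^p$ for $t<r$, so to recover the full $\int_0^r$ against $\widehat{\om}(r)^p$ one must cover $[0,r]$ by countably many intervals $[t_{k+1},t_k]$ and sum the estimates, which converges only if the ratios $\widehat{\om}(t_k)^p/\widehat{\om}(r)^p$ (or the companion $\nu$-tails) form a summable sequence — and that is where the doubling hypothesis $\om\in\DD$ and the reverse-doubling half $\nu\in\Dd$ of $\nu\in\DDD$ enter decisively, since $\nu\in\DD$ alone would not suffice. An alternative, possibly cleaner, route is to invoke Corollary~\ref{corollary:weird}'s relatives already proved for regular weights: one shows the pair of hypotheses forces $\om$ to be, in the relevant sense, comparable to a regular weight against $\nu$ (absolute continuity of $\om$ w.r.t.\ $\nu$ is automatic here), then apply Corollary~\ref{corollary:A^p-L^p-the same} directly. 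Either way the deduction is short, and the only real work is the weighted one-dimensional integral inequality $\DD_p<\infty$ and $N_p<\infty$ $\Rightarrow$ $M_p<\infty$ under $\om\in\DD$, $\nu\in\DDD$.
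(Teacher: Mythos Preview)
Your overall plan—handle the trivial direction, then show that $\DD_p(\om,\nu)<\infty$ together with the weak-type bound forces $M_p(\om,\nu,\nu)<\infty$—is exactly the paper's. The difference, and the gap in your sketch, lies in which characterization of the weak-type bound you use and how you close the last step.

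The paper does \emph{not} work with $N_p$ (condition~(ii) of Theorem~\ref{Theorem:T_w-bounded-weak-L^p-case}); it uses $M_{p,\e}$ (condition~(iii)). After the $\DD_p$ step
\[
\left(\int_0^r\frac{s\nu(s)}{\widehat\om(s)^p}\,ds\right)^{\frac1p}
\le\DD_p(\om,\nu)^{\frac1p}\,\frac{\bigl(\int_r^1 s\nu(s)\,ds\bigr)^{1/p}}{\widehat\om(r)},
\]
the paper invokes the lower bound~\eqref{6new}, namely $\int_0^r t\nu(t)\widehat\om(t)^{-(p+\e)}\,dt\gtrsim\widehat\nu(r)\widehat\om(r)^{-(p+\e)}$, which is exactly where $\om\in\DD$ and $\nu\in\Dd$ enter. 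This turns the right-hand side into the first factor of $M_{p,\e}$, and multiplying by $h(r)=\bigl(\int_r^1(\om/s\nu)^{p'}s\nu\,ds\bigr)^{1/p'}$ gives $M_p\lesssim\DD_p^{1/p}M_{p,\e}$ in one line. No dyadic summation is needed.

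Your dyadic route via $N_p$, as written, does not close. In $N_p$ the factor $h$ is evaluated at the \emph{right} endpoint of $[t,r]$. Covering $[0,r]$ by intervals $[t_{k+1},t_k]$ with $t_0=r$ and applying $N_p$ on each yields bounds with $h(t_k)^{-p}\le h(r)^{-p}$ (good) but also $\widehat\om(t_{k+1})^p/\widehat\om(t_k)^p\ge1$, so the sum has $\asymp\log\bigl(\widehat\om(0)/\widehat\om(r)\bigr)$ terms with no decay—unbounded as $r\to1^-$. The hypothesis $\om\in\DD$ constrains $\widehat\om$ only as $r\to1^-$ and gives nothing here. If instead you decompose $[r,1)$ to capture $\widehat\nu(r)/\widehat\om(r)^p$, the $h$-factor from $N_p$ sits at points $r^*>r$, and there is no general inequality $h(r)\lesssim h(r^*)$. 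Either way the sketch does not sum; the clean fix is to pass from $N_p$ to $M_{p,\e}$ (they are comparable by Theorem~\ref{Theorem:T_w-bounded-weak-L^p-case}) and then argue via~\eqref{6new} as the paper does.

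Your alternative through Corollary~\ref{corollary:A^p-L^p-the same} is not available: that result needs $\om,\nu\in\R$, strictly stronger than $\om\in\DD$, $\nu\in\DDD$, and the pair of hypotheses in Corollary~\ref{corollary:weird} does not force regularity (take $\om=\nu\in\DDD\setminus\R$).
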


In Section~\ref{sec:analytic} we  prove Theorem~\ref{Theorem:T_w-bounded-intro} and Corollary~\ref{corollary:A^p-L^p-the same}. Theorem~\ref{Theorem:T_w-bounded-weak-L^p-case} is proved in Section~\ref{Theorem:T_w-bounded-weak-L^p-case}, where we also provide the details of the proof of Corollary~\ref{corollary:weird} and offer concrete examples related to the results obtained in this study.

Throughout the paper $\frac{1}{p}+\frac{1}{p'}=1$ for $1<p<\infty$. Further, the letter $C=C(\cdot)$ will denote an
absolute constant whose value depends on the parameters indicated
in the parenthesis, and may change from one occurrence to another.
We will use the notation $a\lesssim b$ if there exists a constant
$C=C(\cdot)>0$ such that $a\le Cb$, and $a\gtrsim b$ is understood
in an analogous manner. In particular, if $a\lesssim b$ and
$a\gtrsim b$, then we will write $a\asymp b$. An observant reader notices that this last notation is already in use in \eqref{Eq:norm-estimates} and \eqref{eq:eqweaknorms}.

\section{Analytic case}\label{sec:analytic}

\subsection{Test functions}

We first establish necessary conditions for $T_\om: A^p_\nu \to L^q_\eta$ to be bounded when $0<p\le q<\infty$, $\nu\in\DD$ and $\om,\eta$ are radial weights. To do this, some background material is needed.

For a radial weight $\om$, the normalized monomials $\frac{z^n}{\sqrt{2\int_0^1 r^{2n+1}\om(r)\,dr}}$, $n\in\N\cup\{0\}$, form the standard orthonormal basis of $A^2_\om$, and hence
    \begin{equation}\label{repker}
    B^\om_z(\z)=\sum_{n=0}^\infty\frac{(\z\bar{z})^n}{2\int_0^1 r^{2n+1}\om(r)\,dr},\quad z,\z\in\D.
    \end{equation}
Write
    $$
    \om_{t,x}=\int_t^1 s^x\om(s)\,ds,\quad 0\le x<\infty,\quad 0\le t<1,
    $$
and $\om_{x}=\om_{0,x}$ for short. For $f\in\H(\D)$ with Maclaurin series $f(z)=\sum_{n=0}^\infty\widehat{f}(n)z^n$ and $k\in\N\cup\{0\}$, denote $\Delta_k f(z)=\sum_{n=2^k}^{2^{k+1}-1} \widehat{f}(n)z^n.$
With these preparations we can state and prove the following mean estimate for the image of $(B^\nu_a)^{(N)}$ under $T_\om$.

\begin{lemma}\label{le:testfunctionsq>p}
Let $0<q<\infty$, $\nu\in\DD$, $\om$ a radial weight, $N\in\N$ and $a\in\D$ with $|a|\ge 1-\frac{1}{2N}$. Then
    \begin{equation}\label{eq:testfunctionsq>p}
    \widehat{\om}(t)^qM^q_q\left(t,T_\om\left((B^\nu_a)^{(N)}\right)\right)
    \gtrsim\left\{
        \begin{array}{rl}
        \frac{\widehat{\om}(a)^q}{\widehat{\nu}(a)^q(1-|a|)^{q(N+1)-1}}, & \quad 0\le t\le |a|,\\
        \frac{\widehat{\om}(t)^q}{\widehat{\nu}(a)^q(1-|a|)^{q(N+1)-1}}, & \quad |a|\le t<1.
        \end{array}\right.
    \end{equation}
\end{lemma}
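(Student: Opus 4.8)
The plan is to exploit the explicit series expansion of the reproducing kernel together with known asymptotic estimates for the Taylor coefficients of $B^\nu_a$ and its derivatives. First I would recall from \eqref{repker} that
$$
(B^\nu_a)^{(N)}(z)=\sum_{n\ge N}\frac{n(n-1)\cdots(n-N+1)}{2\nu_{2n+1}}\,\bar a^{\,n}z^{n-N},
$$
so that $T_\om$ applied to this function, evaluated at $z=te^{i\theta}$, is a power series in $te^{i\theta}$ whose $k$-th coefficient is obtained by integrating the corresponding monomial of $(B^\nu_a)^{(N)}$ against $\om$ over $[t,1]$ and dividing by $\widehat\om(t)$. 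Concretely, if $(B^\nu_a)^{(N)}(z)=\sum_k c_k z^k$ with $c_k=\frac{(k+N)!/k!}{2\nu_{2(k+N)+1}}\bar a^{\,k+N}$, then
$$
\widehat\om(t)\,T_\om\!\left((B^\nu_a)^{(N)}\right)(te^{i\theta})=\sum_k c_k\,\om_{t,k}\,t^k e^{ik\theta},
$$
where $\om_{t,k}=\int_t^1 s^k\om(s)\,ds$. The right-hand side is an analytic function of $te^{i\theta}$ (for fixed $t$ it is a bona fide power series in the variable $u=te^{i\theta}$ once we absorb $t^k$), so I can estimate its $L^q$-mean over the circle $|u|=t$ from below by controlling a single dyadic block $\Delta_k$, using the standard fact (Hardy--Littlewood type inequalities, as the authors announce) that $M_q(t,g)\gtrsim |\widehat g(n)| t^n$ for any single coefficient, and more robustly $M_q(t,g)^q\gtrsim \sum_k t^{q2^{k+1}}|\Delta_k g(1)|^q$-type lower bounds via the smooth-polynomial/Hadamard-product trick alluded to in the introduction.

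The key quantitative input is the size of $\om_{t,k}$ and of the coefficients $c_k$. For a radial weight one has the elementary comparison $\om_{t,k}\asymp \widehat\om\!\left(1-\frac{1-t}{k}\right)$ for the relevant range, and in particular, picking the block of indices $k\asymp \frac{1}{1-|a|}$, one gets $\om_{t,k}\asymp\widehat\om(a)$ when $t\le|a|$ and $\om_{t,k}\asymp\widehat\om(t)$ when $t\ge|a|$ (using that $1-\frac{1-t}{k}$ sits at distance $\asymp 1-|a|$ from the boundary, together with $\nu\in\DD$ is not even needed here, only basic monotonicity of $\widehat\om$). For the coefficients, since $\nu\in\DD$ one has $\nu_{2n+1}\asymp\widehat\nu\!\left(1-\frac1n\right)\asymp\widehat\nu(a)$ for $n\asymp\frac1{1-|a|}$, so $|c_k|\asymp \frac{k^N}{\widehat\nu(a)}|a|^{k}\asymp\frac{(1-|a|)^{-N}}{\widehat\nu(a)}$ for those $k$ (using $|a|^{k}\asymp 1$ since $k(1-|a|)\asymp1$, and $|a|\ge 1-\frac1{2N}$ guarantees we are in the regime where these asymptotics for $\|(B^\nu_a)^{(N)}\|$ from \cite{PelRatKernels} apply). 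Summing $|c_k|^q t^{qk}$ over the $\asymp\frac1{1-|a|}$ indices in the block, with $t^{qk}\asymp1$ when $t\le|a|$, produces the factor $\frac{1}{(1-|a|)}\cdot\frac{(1-|a|)^{-qN}}{\widehat\nu(a)^q}=\frac{1}{\widehat\nu(a)^q(1-|a|)^{q(N+1)-1}}$; multiplying by $\widehat\om(a)^q$ (resp. $\widehat\om(t)^q$) from the $\om_{t,k}$ factors and dividing through by $\widehat\om(t)^q$ reproduces exactly the two cases of \eqref{eq:testfunctionsq>p} once the extra $\widehat\om(t)^q$ on the left is accounted for.

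The main obstacle I anticipate is the passage from a \emph{single-coefficient} (or single dyadic-block) lower bound for the circular mean to something clean: for $q<1$ the Hardy--Littlewood inequality $M_q(t,g)\gtrsim |\widehat g(n)|t^n$ still holds but is lossy, and to capture the full block of $\asymp\frac1{1-|a|}$ comparable coefficients (which is what gives the sharp power of $(1-|a|)$) one cannot simply add coefficient-wise lower bounds. The fix, which is the technical heart, is to replace the raw Taylor block by a convolution with a smooth, compactly-supported (in frequency) polynomial $W_k$ whose Hadamard product with $g$ isolates $\Delta_k g$ up to bounded overlap, for which the reverse inequality $M_q(t,W_k*g)\asymp M_q(t,\Delta_k g)$ and the lower bound $M_q(t,\Delta_k g)^q\gtrsim t^{q2^{k+1}}\sum_{n\in\text{block}}|\widehat g(n)|^q$ are available uniformly in $q\in(0,\infty)$; I would cite the relevant lemma for this. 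The remaining subtlety is bookkeeping the two ranges $t\le|a|$ versus $t>|a|$ consistently — for $t>|a|$ one must check that $\om_{t,k}\asymp\widehat\om(t)$ rather than $\widehat\om(a)$, which follows because for $k\asymp\frac1{1-|a|}$ and $t>|a|$ we have $1-\frac{1-t}{k}\le t$ up to the doubling constant, so $\widehat\om$ evaluated there is $\asymp\widehat\om(t)$ — and to verify that the constants are uniform in $a$ on the stated annulus $|a|\ge1-\frac1{2N}$, which is exactly the range where the kernel-derivative asymptotics of \cite{PelRatKernels} kick in.
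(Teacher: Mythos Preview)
Your series expansion is wrong, and the error is not cosmetic. From
$(B^\nu_a)^{(N)}(z)=\sum_k c_k z^k$ one gets
\[
\widehat\om(t)\,T_\om\!\left((B^\nu_a)^{(N)}\right)(te^{i\theta})
=\sum_k c_k\left(\int_t^1 s^k\om(s)\,ds\right)e^{ik\theta}
=\sum_k c_k\,\om_{t,k}\,e^{ik\theta},
\]
with \emph{no} factor $t^k$; compare the paper's identity \eqref{eq:tf0}. The object whose $L^q$-mean you must bound is therefore the boundary value of $G_t(w)=\sum_k c_k\,\om_{t,k}\,w^k$ on the unit circle, with $t$ entering only through the coefficients $\om_{t,k}$, not as the radius of evaluation. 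Your subsequent step ``summing $|c_k|^q t^{qk}$ over the block, with $t^{qk}\asymp 1$ when $t\le|a|$'' then collapses: for $k\asymp(1-|a|)^{-1}$ and, say, $t=\tfrac12$ with $|a|$ close to $1$, one has $t^{k}\to 0$, so your lower bound would vanish. The lemma is supposed to hold for \emph{all} $t\in[0,|a|]$, and it does precisely because the radius variable has been integrated out.

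Once the spurious $t^k$ is removed, your single-dyadic-block strategy is essentially the paper's argument for $q>2$ (which, as noted there, in fact works for $q>1$). Two further points: (a) the comparison $\om_{t,k}\asymp\widehat\om\!\left(1-\tfrac{1-t}{k}\right)$ you invoke requires $\om\in\DD$, which is not assumed; only the lower bound $\om_{t,k}\gtrsim\widehat\om(\max(t,1-1/k))$ holds for arbitrary radial $\om$, and that is all you need. (b) For $0<q\le1$ the block extraction $\|G_t\|_{H^q}\gtrsim\|\Delta_k G_t\|_{H^q}$ via smooth multipliers is not available in the form you suggest; the paper covers $0<q\le2$ separately by the Hardy--Littlewood coefficient inequality $\|G\|_{H^q}^q\gtrsim\sum_n (n+1)^{q-2}|\widehat G(n)|^q$, summing over all indices $N\le j\le N^\star\asymp(1-|a|)^{-1}$ rather than a single block. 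You should do the same, or justify the block estimate down to $q>0$.
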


\begin{proof}
First observe that
    \begin{equation}\label{eq:tf0}
    \begin{split}
    \widehat{\om}(t)T_\om\left((B^\nu_a)^{(N)}\right)(te^{i\theta})
    &=\int_t^1 (B^\nu_a)^{(N)}(se^{i\theta})\,\om(s)ds\\
    &=\int_t^1 \left(\sum_{j= N}^\infty    \frac{j(j-1)\cdots(j-N+1)(se^{i\theta})^{j-N}\overline{a}^{j}}{2\nu_{2j+1}} \right)\,\om(s)ds\\
    &=\sum_{j= N}^\infty\frac{j(j-1)\cdots(j-N+1) e^{i\theta(j-N)}\overline{a}^{j}}{2\nu_{2j+1}}\om_{t,j-N}.
    \end{split}
    \end{equation}
From now on we split the proof into four cases. Let first $0\le t\le |a|$ and $0<q\le2$. Then the classical Hardy-Littlewood inequality \cite[Theorem~6.2]{Duren1970} gives
    \begin{equation}
    \begin{split}\label{eq:tf1}
    \widehat{\om}(t)^qM^q_q\left(t,T_\om\left((B^\nu_a)^{(N)}\right)\right)
    &\gtrsim\sum_{j=N}^\infty \frac{j^{Nq+q-2}\om^q_{t,j-N}|a|^{jq}}{\nu^q_{2j+1}}.
    \end{split}
    \end{equation}
Choose $N^\star\in\N$ such that $1-\frac{1}{N^\star}\le |a|<1-\frac{1}{N^\star+1}$. Then the inequality $0\le t\le|a|$ and \cite[Lemma~2.1(iv)]{PelSum14} yield
    \begin{equation}
    \begin{split}\label{eq:tf2}
    \sum_{j=N}^{N^\star} \frac{j^{Nq+q-2}\om^q_{t,j-N}|a|^{jq}}{\nu^q_{2j+1}}
    &\gtrsim\sum_{j=N}^{N^\star} \frac{j^{Nq+q-2}\om^q_{|a|,j-N}}{\nu^q_{2j+1}}
    \gtrsim\widehat{\om}(a)^q\sum_{j=N}^{N^\star} \frac{j^{Nq+q-2}}{\widehat{\nu}\left(1-\frac{1}{2j+1}\right)^q}\\
    & \gtrsim\widehat{\om}(a)^q\int_{N}^{N^\star+1}\frac{s^{q(N+1)-2}}{\widehat{\nu}(1-\frac{1}{2s+1})^{q}}\, ds
    \asymp\widehat{\om}(a)^q\int_{N}^{N^\star+1}\frac{s^{q(N+1)-2}}{\widehat{\nu}(1-\frac{1}{s})^{q}}\,ds\\
    &\ge\widehat{\om}(a)^q\int_{1-\frac{1}{N}}^{1-\frac1{N^\star +1}}\frac{dt}{\widehat{\nu}(t)^{q}(1-t)^{q(N+1)}}\\
    &\ge\widehat{\om}(a)^q \int_{1-\frac{1}{N}}^{|a|}\frac{dt}{\widehat{\nu}(t)^{q}(1-t)^{q(N+1)}}\\
    &\ge\widehat{\om}(a)^q \int_{2|a|-1}^{|a|}\frac{dt}{\widehat{\nu}(t)^{q}(1-t)^{q(N+1)}}\\
    &\gtrsim\frac{\widehat{\om}(a)^q}{\widehat{\nu}(2|a|-1)^q(1-|a|)^{q(N+1)-1}}\\
    &\gtrsim\frac{\widehat{\om}(a)^q}{\widehat{\nu}(a)^q(1-|a|)^{q(N+1)-1}},
    \end{split}
    \end{equation}
which combined with \eqref{eq:tf1} gives the assertion in the case $0\le t\le |a|$ and $0<q\le2$. If $|a|\le t<1$, then the assertion readily follows by applying the estimate $\om_{t,j-N}\gtrsim\widehat{\om}(t)$ in the reasoning \eqref{eq:tf2}.

Let now $q>2$ (this approach actually works for any $q>1$). 
We begin by showing that
    \begin{equation}\label{2-1}
    \left\|\Delta_k F_{N+1}\right\|^q_{H^q}\gtrsim 2^{k[q(N+1)-1]},\quad k\ge\log_2N,
    \end{equation}
where $F_{N+1}$ is the function, analytic in the unit disc, defined by the Maclaurin series
		\begin{equation*}
		F_{N+1}(z) = \sum_{j=N}^\infty j(j-1)\cdots(j-N+1)z^j, \quad z \in \D,
		\end{equation*}
and thus
		\begin{equation}\label{1}
    \Delta_k F_{N+1}(z)=\sum_{j=2^k}^{2^{k+1}-1}j(j-1)\cdots(j-N+1)z^{j},\quad z\in\D.
    \end{equation}
By using the well known estimate $M_\infty(r,f)\lesssim(\r-r)^{-\frac1q}M_q(\r,f)$, valid for all $0<r<\r<1$, $0<q<\infty$ and $f\in\H(\D)$, and \cite[Lemma~10]{PelRathg}, we deduce
    \begin{equation*}
    \begin{split}
    M_\infty\left(1-\frac1{2^k},\Delta_kF_{N+1}\right)
    &\lesssim\left(\frac1{2^k}-\frac1{2^{k+1}-1}\right)^{-\frac{1}{q}}M_q\left(1-\frac{1}{2^{k+1}-1},\Delta_kF_{N+1}\right)\\
    &\asymp2^{\frac{k}{q}}\|\Delta_kF_{N+1}\|_{H^q},
    \end{split}
    \end{equation*}
which together with \eqref{1} gives
    \begin{equation*}
    \begin{split}
    \|\Delta_kF_{N+1}\|_{H^q}^q
    &\gtrsim2^{-k}M^q_\infty\left(1-\frac1{2^k},\Delta_kF_{N+1}\right)
    \asymp2^{-k}\left(\sum_{j=2^k}^{2^{k+1}-1}j^N\left(1-\frac1{2^k}\right)^{j}\right)^q\\
    &\geq 2^{Nkq-k}\left(2^{k}\left(1-2^{-k}\right)^{2^{k+1}}
    \right)^q 
    \asymp2^{k[q(N+1)-1]}.
    \end{split}
    \end{equation*}
Therefore \eqref{2-1} is now proved.

Let $0\le t\le |a|$. For $|a|\ge 1/2$, choose $k\in\N$ such that $1-2^{-k}\le |a|<1-2^{-k-1}$. Then \eqref{eq:tf0}, 
\cite[Lemma~D]{PelRatKernels} and \cite[Lemma~2.1]{PelSum14}  yield
    \begin{equation*}
    \begin{split}
    \widehat{\om}(t)^q M^q_q(t,T_\om\left((B^\nu_a)^N\right)
    &\gtrsim\int_0^{2\pi}\left|\sum_{j= 2^k}^{2^{k+1}-1}
    \frac{j(j-1)\cdots(j-N+1) e^{i\theta(j-N)}\overline{a}^{j}}{2\nu_{2j+1}}\om_{t,j-N}\right|^q\,d\t\\
    &\gtrsim \frac{|a|^{q2^{k+1}} \om^q_{t,2^{k+1}-N}}{ \nu_{2^{k+1}}^q}
    \int_0^{2\pi}\left|\sum_{j= 2^k}^{2^{k+1}-1}
    j(j-1)\cdots(j-N+1) e^{i\theta(j-N)}\right|^q\,d\t\\
    &\gtrsim\frac{\om^q_{|a|,2^{k+1}-N}} {\widehat{\nu}\left(1-\frac{1}{2^{k+1}}\right)^q}
    \left \|\Delta_k F_{n+1} \right\|^q_{H^q}
    \gtrsim\frac{\widehat{\om}(a)^q}{\widehat{\nu}(a)^q}2^{k[(N+1)q-1]}\\
    &\asymp\frac{\widehat{\om}(a)^q}{\widehat{\nu}(a)^q(1-|a|)^{q(N+1)-1}},
    \end{split}
    \end{equation*}
and thus the assertion for $0\le t\le |a|$ and $q>2$ is valid. The assertion in the case $t\ge|a|$ and $q>2$ readily follows by applying the estimate $\om_{t,2^{k+1}-N}\gtrsim \widehat{\om}(t)$ in the reasoning above.
\end{proof}

\begin{proposition}\label{Theorem:T_w-boundedpq3 weights}
Let $0<p\le q<\infty$, $\nu\in\DD$ and $\om,\eta$ radial weights.
If $T_\om: A^p_\nu \to L^q_\eta$ is bounded, then
    \begin{equation}\label{first-necessary-condition}
    \begin{split}
    \sup_{0\le r<1}\frac{\widehat{\om}(r)^q}{(1-r)^{\frac{q}{p}-1}\left(\int_r^1 s\nu(s)\,ds \right)^{\frac{q}{p}}}\int_0^r\frac{t\eta(t)}{\widehat{\om}(t)^q}\,dt
    \lesssim\|T_\om\|^q_{A^p_\nu\to L^q_\eta}<\infty
    \end{split}
    \end{equation}
and
    \begin{equation}\label{second-necessary-condition}
    \begin{split}
    \sup_{0\le r<1}\frac{\int_r^1 s\eta(s)\,ds)}{(1-r)^{\frac{q}{p}-1}\left(\int_r^1 s\nu(s)\,ds \right)^{\frac{q}{p}}}
    \lesssim\|T_\om\|^q_{A^p_\nu\to L^q_\eta}<\infty.
    \end{split}
    \end{equation}
\end{proposition}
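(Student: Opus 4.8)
The plan is to obtain both necessary conditions by testing the boundedness of $T_\om$ on the family of analytic functions $f_a=(B^\nu_a)^{(N)}$ for a suitable fixed $N\in\N$ and $a\in\D$ with $|a|$ close to $1$, and then combining the mean estimates of Lemma~\ref{le:testfunctionsq>p} with sharp norm estimates for $\|(B^\nu_a)^{(N)}\|_{A^p_\nu}$ from \cite{PelRatKernels}. Recall from \cite{PelRatKernels} that for $\nu\in\DD$ and $N$ large enough (depending only on $p$ and $\nu$) one has $\|(B^\nu_a)^{(N)}\|^p_{A^p_\nu}\asymp \widehat{\nu}(a)^{-p}(1-|a|)^{1-p(N+1)}$ uniformly for, say, $|a|\ge 1-\tfrac1{2N}$. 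So the plan is to fix such an $N$ once and for all, discard small $|a|$ (where everything is comparable to a constant), and argue for $|a|\to 1^-$.

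First I would write, using the boundedness hypothesis,
    $$
    \|T_\om\|_{A^p_\nu\to L^q_\eta}^p\,\|(B^\nu_a)^{(N)}\|^p_{A^p_\nu}\ge \|T_\om((B^\nu_a)^{(N)})\|^p_{L^q_\eta},
    $$
and then estimate the right-hand side from below. Since $q\ge p$, I cannot simply integrate the $q$-means, so I pass through $M_p$: by the monotonicity $\ell^p\hookrightarrow\ell^q$ of coefficient sums (or directly, since $M_q\ge M_q$ trivially bounds $M_p$ only the wrong way) — more precisely I use that $M_q(t,g)\ge$ (something) is what Lemma~\ref{le:testfunctionsq>p} already gives for the $q$-means directly, so I write
    $$
    \|T_\om((B^\nu_a)^{(N)})\|^q_{L^q_\eta}=\int_0^1 M^q_q\left(t,T_\om((B^\nu_a)^{(N)})\right)\eta(t)t\,dt\cdot 2
    $$
(up to the normalization of $dA$) and plug in \eqref{eq:testfunctionsq>p}. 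Splitting the integral at $t=|a|$: the piece $0\le t\le|a|$ contributes
    $$
    \gtrsim \frac{\widehat{\om}(a)^q}{\widehat{\nu}(a)^q(1-|a|)^{q(N+1)-1}}\int_0^{|a|}\frac{t\eta(t)}{\widehat{\om}(t)^q}\,dt,
    $$
while the piece $|a|\le t<1$ contributes
    $$
    \gtrsim \frac{1}{\widehat{\nu}(a)^q(1-|a|)^{q(N+1)-1}}\int_{|a|}^1 \widehat{\om}(t)^{q}\cdot\frac{\eta(t)t}{\widehat{\om}(t)^q}\,dt=\frac{1}{\widehat{\nu}(a)^q(1-|a|)^{q(N+1)-1}}\int_{|a|}^1 t\eta(t)\,dt.
    $$
Now I take both estimates to the power $1/q$, raise to the $p$, and divide by $\|(B^\nu_a)^{(N)}\|^p_{A^p_\nu}\asymp\widehat{\nu}(a)^{-p}(1-|a|)^{1-p(N+1)}$. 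In the first case the factor $\widehat{\nu}(a)^{-p}(1-|a|)^{1-p(N+1)}$ cancels against the $\widehat{\nu}(a)^{-p}$ and $(1-|a|)^{(1-q(N+1))p/q}$ coming from the mean estimate, leaving precisely the term $\frac{\widehat{\om}(a)^p}{(1-|a|)^{q/p-1}}\big(\int_0^{|a|}\frac{t\eta(t)}{\widehat{\om}(t)^q}\,dt\big)^{p/q}$ divided by... — here I must keep careful track of the exponents: after the cancellation the residual power of $(1-|a|)$ is $\tfrac pq(1-q(N+1))-(1-p(N+1))=1-\tfrac pq-0$... wait, this simplifies to $\tfrac{p}{q}-1$ appearing with a negative sign, and the residual $\widehat{\nu}(a)$ power combines with $\int_a^1 s\nu(s)\,ds\asymp\widehat{\nu}(a)$ (which holds since $\nu\in\DD$, actually $\widehat\nu(a)=\int_{|a|}^1\nu$ by definition, and $\int_{|a|}^1 s\nu(s)\,ds\asymp\widehat\nu(a)$ for $\nu\in\DD$) to produce $\big(\int_r^1 s\nu(s)\,ds\big)^{q/p}$ in the denominator. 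Doing the same bookkeeping in the second case yields \eqref{second-necessary-condition}, and replacing $|a|$ by an arbitrary $r\in[1-\tfrac1{2N},1)$ and then noting the supremum over $r\in[0,1-\tfrac1{2N}]$ is trivially finite (all quantities continuous and the integrand controlled) completes both claims.

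The main obstacle is the exponent bookkeeping combined with correctly invoking the $\DD$-properties: I need $\widehat\nu(a)\asymp\int_{|a|}^1 s\nu(s)\,ds$ and I need the kernel norm asymptotics $\|(B^\nu_a)^{(N)}\|_{A^p_\nu}$ to hold for the specific $N$ chosen in Lemma~\ref{le:testfunctionsq>p} (which only requires $|a|\ge 1-\tfrac1{2N}$), so I should check that \cite[Theorem~/Lemma~]{PelRatKernels} gives the two-sided estimate for all sufficiently large $N$ and all $p\in(0,\infty)$; if the reference only gives an upper bound for $\|(B^\nu_a)^{(N)}\|_{A^p_\nu}$ that is in fact exactly the direction I need here (upper bound on the test-function norm yields a lower bound on $\|T_\om\|$), so the argument is robust. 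A minor secondary point is that $\widehat\om(t)$ need not be comparable to anything nice — but it only enters through the ratio $\widehat\om(a)^q/\int_0^r \widehat\om(t)^{-q}t\eta(t)\,dt$ in the first condition and not at all in the second, so no regularity of $\om$ is used, exactly as the statement allows. Thus no extra hypotheses creep in, and the two displayed inequalities follow.
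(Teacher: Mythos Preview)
Your approach is essentially identical to the paper's: test on $(B^\nu_a)^{(N)}$, bound $\|T_\om((B^\nu_a)^{(N)})\|_{L^q_\eta}^q$ from below via Lemma~\ref{le:testfunctionsq>p} split at $t=|a|$, and combine with the upper bound on $\|(B^\nu_a)^{(N)}\|_{A^p_\nu}^p$ from \cite{PelRatKernels}. The one concrete correction to your bookkeeping: the norm estimate is $\|(B^\nu_a)^{(N)}\|_{A^p_\nu}^p\lesssim\widehat\nu(a)^{1-p}(1-|a|)^{1-p(N+1)}$ (exponent $1-p$, not $-p$, on $\widehat\nu(a)$; the paper obtains this from the integral asymptotic in \cite[Theorem~1]{PelRatKernels}, choosing $N=N(p,\nu)$ sufficiently large when $0<p<1$), and with this fix the cancellation leaves precisely the residual factor $\widehat\nu(a)^{q/p}\asymp\big(\int_{|a|}^1 s\nu(s)\,ds\big)^{q/p}$ in the denominator, giving \eqref{first-necessary-condition} and \eqref{second-necessary-condition}.
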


\begin{proof}
Assume that $T_\om:A^p_\nu\to L^q_{\eta}$ is bounded. Then, for each $a\in\D$ and $N\in\N$,
    \begin{equation}\label{eq:i2}
    \begin{split}
    \left\| T_\om\left((B_a^\nu)^{(N)} \right)\right\|^q_{L^q_\eta}
    &\le\|T_\om\|^q_{A^p_\nu\to L^q_\eta}\left\|\left(B_a^\nu \right) ^{(N)} \right\|^q_{A^p_\nu}\\
    &\asymp\|T_\om\|^q_{A^p_\nu\to L^q_\eta}\left(\int_0^{|a|}\frac{dt}{\widehat{\nu}(t)^{p-1}(1-t)^{p(N+1)}}\right)^\frac{q}p,\quad |a|\to 1^-,
    \end{split}\end{equation}
by \cite[Theorem~1]{PelRatKernels}. If $p\ge 1$, then
    \begin{equation*}
    \left\|(B_a^\nu)^{(N)}\right\|^p_{A^p_\nu}
    \lesssim\frac{1}{\widehat{\nu}(a)^{p-1}}\int_0^{|a|}\frac{dt}{(1-t)^{p(N+1)}}
    \asymp\frac{1}{\widehat{\nu}(a)^{p-1}(1-|a|)^{p(N+1)-1}}, \quad |a|\to 1^-,
    \end{equation*}
for each $N\in\N$. If $0<p<1$, choose $N=N(p,\nu)\in\N$ such that $N>\frac{1+(1-p)\b}{p}-1$, where $\b=\b(\nu)>0$ is that of \cite[Lemma~2.1(ii)]{PelSum14}. Then
    \begin{equation*}
    \begin{split}
    \left\|(B_a^\nu)^{(N)}\right\|^p_{A^p_\nu}
    &\lesssim \frac{1}{\widehat{\nu}(a)^{p-1}(1-|a|)^{(1-p)\beta}} \int_0^{|a|}\frac{dt}{(1-t)^{p(N+1)-(1-p)\beta}}\\
    &\asymp \frac{1}{\widehat{\nu}(a)^{p-1}(1-|a|)^{p(N+1)-1}}, \quad |a|\to 1^-.
    \end{split}
    \end{equation*}
That is, for each $0<p<\infty$, there exists $N=N(p,\nu)\in\N$ such that
    \begin{equation}\label{eq:iii2}
    \left\|(B_a^\nu)^{(N)}\right\|^p_{A^p_\nu}\lesssim \frac{1}{\widehat{\nu}(a)^{p-1}(1-|a|)^{p(N+1)-1}},\quad |a|\to 1^-.
    \end{equation}
On the other hand, Lemma~\ref{le:testfunctionsq>p} yields
    \begin{equation}
    \begin{split}\label{eq:ii2}
    \left\| T_\om\left((B_a^\nu)^{(N)} \right)\right\|^q_{L^q_\eta}
    &\asymp\int_0^{|a|} M_q^q\left(t,T_\om\left(B_a^\nu\right)^{(N)}\right)t\eta(t)\,dt
    +\int_{|a|}^1M_q^q\left(t, T_\om\left(B_a^\nu\right)^{(N)}\right) t\eta(t)\,dt\\
    &\gtrsim\left(\int_0^{|a|}\frac{t\eta(t)}{\widehat{\om}(t)^q}\,dt\right)
    \frac{\widehat{\om}(a)^q}{\widehat{\nu}(a)^q(1-|a|)^{q(N+1)-1}}\\
    &\quad+\frac{\widehat{\eta}(a)}{\widehat{\nu}(a)^q(1-|a|)^{q(N+1)-1}},\quad |a|\ge 1-\frac{1}{2N}.
    \end{split}
    \end{equation}
The assertions follow by combining \eqref{eq:i2},\eqref{eq:iii2} and \eqref{eq:ii2}.
\end{proof}

It is worth noticing that replacing the  derivatives of the Bergman reproducing kernels  by the monomials in
the proof of Proposition~\ref{Theorem:T_w-boundedpq3 weights} yields analogous conditions to \eqref{first-necessary-condition} and \eqref{second-necessary-condition} but
without the factor $(1-r)^{\frac{q}{p}-1}$ in the denominator on the left hand side. So, one gets the same condition
for $q=p$ but a weaker one for $q>p$.

We make two observations on Proposition~\ref{Theorem:T_w-boundedpq3 weights}. First, \eqref{first-necessary-condition} implies \eqref{second-necessary-condition} if $\om\in\DD$ and $\eta\in\Dd$. Namely, by \cite[Lemma~2.1]{PelSum14} there exists $\alpha=\alpha(\om,q)>0$ such that $(1-t)^\alpha\widehat{\om}(t)^{-q}$ is essentially decreasing, and hence
    \begin{equation*}
    \int_0^r\frac{t\eta(t)}{\widehat{\om}(t)^q}\,dt
    \gtrsim\frac{1}{\widehat{\om}(r)^q}\int_0^r\left(\frac{1-r}{1-t}\right)^\a t\eta(t)\,dt.
    \end{equation*}
Let $K=K(\eta)>1$ be that of the definition of $\Dd$, and define $r_n=1-K^{-n}$ for all $n\in\N\cup\{0\}$. If $r_1\le r<1$, then there exists $m\in\N$ such that $r_m\le r<r_{m+1}$. Therefore
    \begin{equation*}
    \begin{split}
    \int_0^r\left(\frac{1-r}{1-t}\right)^\a t\eta(t)\,dt
    &\ge \int_{r_{m-1}}^{r_{m}}\left(\frac{1-r}{1-t}\right)^\a t\eta(t)\,dt
    \ge r_1\left(\frac{1-r_{m+1}}{1-r_{m-1}}\right)^\a\left(\widehat{\eta}(r_{m-1})-\widehat{\eta}(r_{m})\right)\\
    &\ge r_1\frac{(C-1)}{K^{2\a}}\widehat{\eta}(r_{m})\ge r_1\frac{(C-1)}{K^{2\a}}\widehat{\eta}(r),\quad r_1\le r<1,
    \end{split}
    \end{equation*}
for all $\alpha>0$.
So, if $\om\in\DD$ and $\eta\in\Dd$, then
	\begin{equation}\label{6new}
	\int_0^r\frac{t\eta(t)}{\widehat{\om}(t)^q}\,dt\gtrsim \frac{\widehat{\eta}(r)}{\widehat{\om}(r)^q}, \quad
	r_1(\eta)=r_1\le r<1,
	\end{equation}
and thus \eqref{second-necessary-condition} follows from \eqref{first-necessary-condition} in this case.

Second, if $\nu\in\DD$, the supremum in \eqref{second-necessary-condition} is finite if and only if $A^p_\nu$ is continuously embedded into $L^q_\eta$ by \cite[Theorem~1]{PelRat2014}, and thus $\|I_d\|_{A^p_\nu\to L^q_\eta}\lesssim\|T_\om\|_{A^p_\nu\to L^q_\eta}$ for each radial weight $\om$.

Bearing in mind the special case $\eta=\nu$ of \eqref{second-necessary-condition}, we get the following immediate consequence.

\begin{corollary}\label{co:7}
Let $0<p<q<\infty$ and $\nu\in\DD$. Then $T_\om:A^p_\nu\to L^q_\nu$ is unbounded for each radial weight $\om$.
\end{corollary}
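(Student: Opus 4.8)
The plan is to derive Corollary~\ref{co:7} directly from the second necessary condition \eqref{second-necessary-condition} of Proposition~\ref{Theorem:T_w-boundedpq3 weights} by specializing $\eta=\nu$ and showing the resulting supremum is always infinite when $q>p$ and $\nu\in\DD$. So suppose, toward a contradiction, that $T_\om:A^p_\nu\to L^q_\nu$ is bounded for some radial weight $\om$. Then by \eqref{second-necessary-condition} with $\eta=\nu$ we obtain
    \begin{equation*}
    \sup_{0\le r<1}\frac{\int_r^1 s\nu(s)\,ds}{(1-r)^{\frac{q}{p}-1}\left(\int_r^1 s\nu(s)\,ds\right)^{\frac{q}{p}}}
    =\sup_{0\le r<1}\frac{1}{(1-r)^{\frac{q}{p}-1}\left(\int_r^1 s\nu(s)\,ds\right)^{\frac{q}{p}-1}}<\infty.
    \end{equation*}
Since $q>p$ forces $\frac{q}{p}-1>0$, this finiteness is equivalent to the existence of a constant $c>0$ with $(1-r)^{\frac{q}{p}-1}\left(\int_r^1 s\nu(s)\,ds\right)^{\frac{q}{p}-1}\ge c$ for all $0\le r<1$, i.e.\ $(1-r)\int_r^1 s\nu(s)\,ds\gtrsim 1$ for all $0\le r<1$.

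The next step is to see that this lower bound is impossible for any radial weight. As $r\to 1^-$ we have $1-r\to 0$, while $\int_r^1 s\nu(s)\,ds\le\int_0^1 s\nu(s)\,ds<\infty$ because $\nu$ is integrable over $\D$; hence the product $(1-r)\int_r^1 s\nu(s)\,ds\to 0$, contradicting the claimed positive lower bound. (One does not even need $\nu\in\DD$ for this contradiction; the hypothesis $\nu\in\DD$ is only what makes Proposition~\ref{Theorem:T_w-boundedpq3 weights} applicable in the first place.) This contradiction shows no such bounded $T_\om$ can exist, which is the assertion of the corollary.

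There is essentially no obstacle here: the corollary is an immediate unpacking of the $\eta=\nu$ case of \eqref{second-necessary-condition}, and the only thing to check is the elementary fact that $(1-r)\int_r^1 s\nu(s)\,ds$ tends to $0$ as $r\to1^-$. If one wanted to phrase it even more tersely, one could simply note that \eqref{second-necessary-condition} with $\eta=\nu$ reduces the putative finiteness to $\sup_{0\le r<1}\left((1-r)\int_r^1 s\nu(s)\,ds\right)^{1-\frac{q}{p}}<\infty$, and since $1-\frac{q}{p}<0$ this would require $(1-r)\int_r^1 s\nu(s)\,ds$ to stay bounded away from $0$, which fails near $r=1$. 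I would present the short contradiction argument above, cross-referencing Proposition~\ref{Theorem:T_w-boundedpq3 weights}.
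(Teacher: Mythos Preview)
Your proof is correct and follows exactly the approach the paper indicates: specialize \eqref{second-necessary-condition} in Proposition~\ref{Theorem:T_w-boundedpq3 weights} to $\eta=\nu$ and observe that the resulting supremum cannot be finite when $q>p$ because $(1-r)\int_r^1 s\nu(s)\,ds\to0$ as $r\to1^-$. The paper states the corollary as an immediate consequence without spelling out these details, so your write-up is simply a fuller version of the same argument.
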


\subsection{Proof of Theorem~\ref{Theorem:T_w-bounded-intro}}

Let $h^\star(z)=\esssup_{0<r<|z|}|h(rz/|z|)|$ denote the radial maximal function of $h$ at $z\in\D\setminus\{0\}$.

\begin{lemma}\label{Lemma:radial-maximal-function-weighted}
Let $0<p\le1\le q<\infty$ and $\om$ a radial weight. Then there exist constants $C_1=C_1(p,\om)>0$ and $C_2=C_2(q,\om)>0$ such that
    $$
    \left(\int_r^1h(te^{i\t})\om(t)\,dt\right)^p\le C_1\int_r^1 h^\star(te^{i\t})^p\widehat{\om}(t)^{p-1}\om(t)t\,dt
    $$
and
    $$
    \int_r^1h(te^{i\t})^q\widehat{\om}(t)^{q-1}\om(t)\,dt
    \le C_2\left(\int_r^1h^\star(te^{i\t})\om(t)t\,dt\right)^q
    $$
for all non-negative measurable functions $h$ on $\D$ and $\t\in\mathbb{R}$.
\end{lemma}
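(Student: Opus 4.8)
The plan is to reduce both inequalities to one-dimensional statements along the ray $t\mapsto te^{i\t}$, using that along such a ray the radial maximal function of $h$ is simply the running supremum $t\mapsto\esssup_{0<s<t}h(se^{i\t})$, which is non-decreasing. Fix $\t\in\mathbb{R}$, put $g(t)=h(te^{i\t})\ge0$ and
\[
G(t)=h^\star(te^{i\t})=\esssup_{0<s<t}g(s),\qquad 0<t<1,
\]
so that $G$ is non-decreasing on $(0,1)$ and $g\le G$ almost everywhere. Since replacing $g$ by $G$ only enlarges the quantities estimated from above, it suffices to prove both inequalities with $g$ replaced by the non-decreasing function $G$; we may also assume, as is standard, that all integrals appearing are finite, the remaining cases being trivial. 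The common mechanism is the elementary identity $H(r)^s=-\int_r^1\frac{d}{dt}H(t)^s\,dt=s\int_r^1 H(t)^{s-1}(-H'(t))\,dt$ for a suitable non-increasing primitive $H$ with $H(1^-)=0$, applied with $s=p$ and with $s=q$.

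For the first inequality ($0<p\le1$) put $H(t)=\int_t^1 G(s)\om(s)\,ds$, so that $\int_r^1 g(t)\om(t)\,dt\le H(r)$. Monotonicity of $G$ gives $H(t)\ge G(t)\widehat{\om}(t)$, hence, since $p-1\le0$,
\[
H(r)^p=p\int_r^1 H(t)^{p-1}G(t)\om(t)\,dt\le p\int_r^1\bigl(G(t)\widehat{\om}(t)\bigr)^{p-1}G(t)\om(t)\,dt=p\int_r^1 G(t)^p\widehat{\om}(t)^{p-1}\om(t)\,dt.
\]
It remains to reinstate the factor $t$. On $[\tfrac12,1)$ one has $1\le2t$, so the part of the last integral over $[\max(r,\tfrac12),1)$ is $\le2\int_r^1 G^p\widehat{\om}^{p-1}\om\,t\,dt$; and if $0\le r<\tfrac12$, the piece over $[r,\tfrac12)$ is controlled by combining $G(t)\le G(\tfrac12)$ and $\int_r^{1/2}\widehat{\om}^{p-1}\om\,dt=\tfrac1p\bigl(\widehat{\om}(r)^p-\widehat{\om}(\tfrac12)^p\bigr)\le\tfrac1p\widehat{\om}(0)^p$ with the lower bound $\int_{1/2}^1 G^p\widehat{\om}^{p-1}\om\,t\,dt\ge\tfrac1{2p}G(\tfrac12)^p\widehat{\om}(\tfrac12)^p$, in which the standing hypothesis $0<\widehat{\om}(\tfrac12)\le\widehat{\om}(0)<\infty$ is used. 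Collecting the estimates gives $\bigl(\int_r^1 g\om\bigr)^p\le H(r)^p\le C_1(p,\om)\int_r^1 G(t)^p\widehat{\om}(t)^{p-1}\om(t)\,t\,dt$, which is the claim.

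For the second inequality ($1\le q<\infty$) put $H(t)=\int_t^1 G(s)\om(s)s\,ds$, which is exactly the quantity whose $q$-th power is the right-hand side; then $-H'(t)=G(t)\om(t)t$, so $H(r)^q=q\int_r^1 H(t)^{q-1}G(t)\om(t)t\,dt$, and we now bound $\int_r^1 G^q\widehat{\om}^{q-1}\om\,dt$ from above by $C(q,\om)H(r)^q$. On $[\tfrac12,1)$, monotonicity of $G$ yields $H(t)\ge G(t)\int_t^1\om(s)s\,ds\ge\tfrac12 G(t)\widehat{\om}(t)$, which together with $q-1\ge0$ and $t\ge\tfrac12$ gives $H(t)^{q-1}G(t)\om(t)t\ge2^{-q}G(t)^q\widehat{\om}(t)^{q-1}\om(t)$ there, whence $\int_{\max(r,1/2)}^1 G^q\widehat{\om}^{q-1}\om\,dt\le2^q q^{-1}H(r)^q$. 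If $0\le r<\tfrac12$, the remaining tail satisfies $\int_r^{1/2}G^q\widehat{\om}^{q-1}\om\,dt\le G(\tfrac12)^q\widehat{\om}(0)^q$, and this is absorbed using $H(r)\ge H(\tfrac12)\ge\tfrac12 G(\tfrac12)\widehat{\om}(\tfrac12)$ (again invoking $0<\widehat{\om}(\tfrac12)$). Since $g\le G$, the chain $\int_r^1 g^q\widehat{\om}^{q-1}\om\,dt\le\int_r^1 G^q\widehat{\om}^{q-1}\om\,dt\le C_2(q,\om)H(r)^q$ completes the proof.

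The only genuinely delicate point is the bookkeeping forced by the extra factor $t$ (equivalently, by the mismatch between $\widehat{\om}(t)=\int_t^1\om$ and $\int_t^1\om(s)s\,ds$): this is what obliges us to split $[r,1)$ at $\tfrac12$ and makes the constants depend on $\om$ through $\widehat{\om}(\tfrac12)$ and $\widehat{\om}(0)$. Everything else is the integration-by-parts identity together with the pointwise bound $H(t)\gtrsim G(t)\widehat{\om}(t)$ coming from monotonicity of the running supremum; the one thing to watch is the sign of $s-1$, which is why the bound on $H$ is pushed through the power $H(t)^{s-1}$ in opposite directions in the two cases $p\le1$ and $q\ge1$.
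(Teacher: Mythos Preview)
Your argument is correct, but the route differs from the paper's. The paper proves both inequalities by a discrete dyadic decomposition: it fixes $K>1$, defines $\rho_n=\rho_n(\om,K,r)$ by $\widehat{\om}(\rho_n)=K^{-n}\widehat{\om}(r)$, and splits $[r,1)=\bigcup_n[\rho_n,\rho_{n+1})$. On each block the integrand is controlled by its value at $\rho_{n+1}$ via the radial maximal function, the $\ell^p$/$\ell^q$ inequality for sums (for $p\le1$ and $q\ge1$ respectively) is applied, and the geometric relation $\widehat{\om}(\rho_n)-\widehat{\om}(\rho_{n+1})=\frac{K-1}{K}\widehat{\om}(\rho_n)$ lets one reassemble into integrals; the factor $t$ enters through $1/\rho_1$, which is bounded uniformly in $r$ since $\rho_1(r)\ge\rho_1(0)>0$.

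Your approach instead uses the continuous identity $H(r)^s=s\int_r^1 H(t)^{s-1}(-H'(t))\,dt$ together with the pointwise bound $H(t)\ge G(t)\widehat{\om}(t)$ (or its variant with the extra factor $t$), exploiting the monotonicity of $G$ and the sign of $s-1$. This avoids the partition entirely and is arguably more direct; the price is the separate bookkeeping at $t=\tfrac12$ to reconcile $\widehat{\om}(t)$ with $\int_t^1\om(s)s\,ds$, which makes your constants depend on $\widehat{\om}(0)/\widehat{\om}(\tfrac12)$ rather than on the dyadic parameter $\rho_1(0)$. Both dependencies are of the same nature, and the two proofs are essentially dual (discrete vs.\ continuous) versions of the same Hardy-type mechanism.
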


\begin{proof}
For $1<K<\infty$ and $0\le r<1$, define $\r_n=\r_n(\om,K,r)=\min\{0\le t<1:\widehat{\om}(t)=\widehat{\om}(r)K^{-n}\}$ for all $n\in\N$, and set $\r_0=r$. Then $\{\r_n\}$ is increasing such that $\r_n\to1^-$, as $n\to\infty$, and
    $$
    \int_{\r_n}^{\r_{n+1}}\om(t)\,dt=\widehat{\om}(\r_n)-\widehat{\om}(\r_{n+1})=\widehat{\om}(\r_j)K^{j-n}\left(\frac{K-1}{K}\right),\quad n,j\in\N\cup\{0\}.
    $$
Hence
    \begin{equation*}
    \begin{split}
    \left(\int_r^1h(te^{i\t})\om(t)\,dt\right)^p
    &=\left(\sum_{n=0}^\infty\int_{\r_n}^{\r_{n+1}}h(te^{i\t})\om(t)\,dt\right)^p\\
    &\le\left(\sum_{n=0}^\infty h^\star(\r_{n+1}e^{i\t})(\widehat{\om}(\r_n)-\widehat{\om}(\r_{n+1}))\right)^p\\
    &=(K-1)^p\left(\sum_{n=0}^\infty h^\star(\r_{n+1}e^{i\t})\widehat{\om}(\r_{n+1})\right)^p\\
    &\le(K-1)^p\sum_{n=0}^\infty h^\star(\r_{n+1}e^{i\t})^p\widehat{\om}(\r_{n+1})^p
    \frac{\widehat{\om}(\r_{n+1})-\widehat{\om}(\r_{n+2})}{\widehat{\om}(\r_{n+1})\frac{K-1}{K}}\\
    &=K(K-1)^{p-1}\sum_{n=0}^\infty h^\star(\r_{n+1}e^{i\t})^p\widehat{\om}(\r_{n+1})^{p-1}
    (\widehat{\om}(\r_{n+1})-\widehat{\om}(\r_{n+2}))\\
    &\le K(K-1)^{p-1}\sum_{n=0}^\infty\int_{\r_{n+1}}^{\r_{n+2}} h^\star(te^{i\t})^p\widehat{\om}(t)^{p-1}\om(t)\,dt\\
    &\le\frac{K(K-1)^{p-1}}{\r_1}\int_r^{1}h^\star(te^{i\t})^p\widehat{\om}(t)^{p-1}\om(t)t\,dt.
    \end{split}
    \end{equation*}
In a similar manner one deduces
    \begin{equation*}
    \begin{split}
    \int_r^1h(te^{i\t})^q\widehat{\om}(t)^{q-1}\om(t)\,dt
    &= \sum_{n=0}^\infty \int_{\r_n}^{\r_{n+1}} h(te^{i\t})^q\widehat{\om}(t)^{q-1}\om(t)\,dt \\
    &\le \sum_{n=0}^\infty h^\star(\r_{n+1}e^{i\t})^q\widehat{\om}(\r_n)^{q-1}\int_{\r_n}^{\r_{n+1}}\om(t)\,dt \\
    &= \frac{K-1}K \sum_{n=0}^\infty h^\star(\r_{n+1}e^{i\t})^q\widehat{\om}(\r_n)^q \\
    &\le \frac{K-1}K \left(\sum_{n=0}^\infty h^\star(\r_{n+1}e^{i\t})\widehat{\om}(\r_n)\right)^q \\
    &=\frac{K-1}K \left(\sum_{n=0}^\infty h^\star(\r_{n+1}e^{i\t}) \int_{\r_{n+1}}^{\r_{n+2}}\om(t)\,dt \frac{\widehat{\om}(\r_n)}{\widehat{\om}(\r_n)\frac{K-1}{K^2}}\right)^q \\
    &\le \frac{(K-1)^{1-q}}{K^{1-2q}} \left(\sum_{n=0}^\infty \int_{\r_{n+1}}^{\r_{n+2}}h^\star(te^{i\t})\om(t)\,dt\right)^q \\
    &\le K^{2q-1}(K-1)^{1-q} \left(\frac{1}{\r_1}\int_{\r_1}^1h^\star(te^{i\t})\om(t)t\,dt\right)^q \\
    &\le \frac{K^{2q-1}(K-1)^{1-q}}{\r_1^q}\left(\int_r^1h^\star(te^{i\t})\om(t)t\,dt\right)^q,
    \end{split}
    \end{equation*}
and thus the lemma is proved.
\end{proof}

The next lemma shows that the condition $\DD_p(\om,\nu)<\infty$ is self-improving in the sense that if it is satisfied for some $p>0$, then it is also satisfied when $p$ is replaced by a slightly smaller number.

\begin{lemma}\label{Lemma:self-improving-radial-w}
Let $0<p<\infty$ and $\nu,\om$ radial weights on $\D$. Then
    $$
    \DD_p(\om,\nu)\le\DD_{p-\e}(\om,\nu)\le\frac{p}{p-\e(1+\DD_p(\om,\nu))}\DD_p(\om,\nu)
    $$
for all $\e\in\left(0,\frac{p}{\DD_p(\om,\nu)+1}\right)$.
\end{lemma}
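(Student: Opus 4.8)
The plan is to abbreviate $C=\DD_p(\om,\nu)$ and introduce, for fixed $0\le r<1$, the quantities
$$
V(r)=\int_r^1 s\nu(s)\,ds,\qquad
\Phi(r)=\int_0^r\frac{t\nu(t)}{\widehat{\om}(t)^p}\,dt,\qquad
\Phi_\e(r)=\int_0^r\frac{t\nu(t)}{\widehat{\om}(t)^{p-\e}}\,dt,
$$
all of which are finite on $[0,1)$ because $\int_0^1 s\nu(s)\,ds<\infty$ and $\widehat{\om}(r)>0$. In this notation the hypothesis is simply $\widehat{\om}(r)^p\Phi(r)\le C\,V(r)$ for every $r$. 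The inequality $\DD_p(\om,\nu)\le\DD_{p-\e}(\om,\nu)$ I would get for free: since $\widehat{\om}$ is non-increasing, $\widehat{\om}(t)^\e\ge\widehat{\om}(r)^\e$ for $0\le t\le r$, so $\Phi_\e(r)=\int_0^r\widehat{\om}(t)^\e\,\frac{t\nu(t)}{\widehat{\om}(t)^p}\,dt\ge\widehat{\om}(r)^\e\Phi(r)$, i.e. $\widehat{\om}(r)^p\Phi(r)\le\widehat{\om}(r)^{p-\e}\Phi_\e(r)$; dividing by $V(r)$ and taking the supremum over $r$ gives it.

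For the quantitative upper estimate I would run two Riemann--Stieltjes integrations by parts. First, with $d\Phi(t)=\frac{t\nu(t)}{\widehat{\om}(t)^p}\,dt$ and $d\bigl(\widehat{\om}(t)^\e\bigr)=-\e\,\widehat{\om}(t)^{\e-1}\om(t)\,dt$,
$$
\Phi_\e(r)=\int_0^r\widehat{\om}(t)^\e\,d\Phi(t)=\widehat{\om}(r)^\e\Phi(r)+\e\int_0^r\Phi(t)\,\widehat{\om}(t)^{\e-1}\om(t)\,dt .
$$
Into the last integral I insert the hypothesis in the form $\Phi(t)\le C\,V(t)\widehat{\om}(t)^{-p}$ and integrate by parts a second time, now using $\frac{\om(t)}{\widehat{\om}(t)^{p-\e+1}}\,dt=\frac1{p-\e}\,d\bigl(\widehat{\om}(t)^{-(p-\e)}\bigr)$ and $dV(t)=-t\nu(t)\,dt$: the $t=0$ boundary term enters with a negative sign and gets dropped, the $t=r$ boundary term contributes $\frac{\e C}{p-\e}\,V(r)\widehat{\om}(r)^{-(p-\e)}$, and the remaining integral is once again $\Phi_\e(r)$. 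This produces the self-referential inequality
$$
\Phi_\e(r)\le\widehat{\om}(r)^\e\Phi(r)+\frac{\e C}{p-\e}\,\frac{V(r)}{\widehat{\om}(r)^{p-\e}}+\frac{\e C}{p-\e}\,\Phi_\e(r).
$$

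It is precisely here, and nowhere else, that the restriction $\e\in\bigl(0,\frac{p}{C+1}\bigr)$ is used: it says exactly $\frac{\e C}{p-\e}<1$, which lets me absorb the last term on the right into the left-hand side. After absorbing, bounding $\widehat{\om}(r)^\e\Phi(r)\le C\,V(r)\widehat{\om}(r)^{-(p-\e)}$ once more, and collecting constants, I expect
$$
\frac{p-\e-\e C}{p-\e}\,\Phi_\e(r)\le\Bigl(C+\frac{\e C}{p-\e}\Bigr)\frac{V(r)}{\widehat{\om}(r)^{p-\e}}=\frac{pC}{p-\e}\cdot\frac{V(r)}{\widehat{\om}(r)^{p-\e}},
$$
that is, $\widehat{\om}(r)^{p-\e}\Phi_\e(r)\le\frac{pC}{\,p-\e(1+C)\,}\,V(r)$ for all $0\le r<1$; taking the supremum over $r$ gives $\DD_{p-\e}(\om,\nu)\le\frac{p\,\DD_p(\om,\nu)}{p-\e(1+\DD_p(\om,\nu))}$. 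The only delicate point — the ``hard part'', such as it is — is the bookkeeping in the two integrations by parts: one should perform them on $[\d,r]$ and let $\d\to0^+$, checking that $\widehat{\om}^\e$ and $\widehat{\om}^{-(p-\e)}$ are monotone (hence of bounded variation) on $[\d,r]$, that the $t=\d$ boundary terms tend to $0$ (which holds since $\widehat{\om}(t)^{\e-p}\bigl(V(0)-V(t)\bigr)\to 0$ as $t\to0^+$ whether or not $\widehat{\om}(0)$ is finite), and that discarding the non-negative $t=0$ boundary term only enlarges the bound. Since every constant produced is independent of $r$, letting $r\to1^-$ at the end is harmless.
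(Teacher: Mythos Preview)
Your proposal is correct and follows essentially the same route as the paper: the paper performs the identical first integration by parts to get $\Phi_\e(r)=\widehat{\om}(r)^\e\Phi(r)+\e\int_0^r\Phi(s)\widehat{\om}(s)^{\e-1}\om(s)\,ds$, inserts the hypothesis $\Phi(s)\le C\,V(s)\widehat{\om}(s)^{-p}$, and then computes $\int_0^r V(s)\om(s)\widehat{\om}(s)^{-(p-\e+1)}\,ds$ by Fubini rather than by a second integration by parts---but these are the same computation, yielding the same self-referential inequality and the same absorption step under $\e<\frac{p}{C+1}$. Your added care about working on $[\delta,r]$ and letting $\delta\to0^+$ is fine (the paper simply writes ``an integration by parts gives'' and keeps the boundary terms at $0$ explicitly, using that $\widehat{\om}(0)<\infty$ in this setting).
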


\begin{proof}
The first inequality is obvious.
Let us denote $\nu_1(t)=t\nu(t)$.
 An integration by parts gives
    \begin{equation*}
    \begin{split}
    \int_0^r\frac{\nu_1(t)}{\widehat{\om}(t)^{p-\e}}\,dt
    &= \widehat{\om}(r)^\e \int_0^r\frac{\nu_1(t)}{\widehat{\om}(t)^p}\,dt
    +\e\int_0^r\left(\int_0^s\frac{\nu_1(t)}{\widehat{\om}(t)^{p}}\,dt\right)\,\widehat{\om}(s)^{\e-1}\om(s)\,ds\\
    &\le\DD_p(\om,\nu)\frac{\widehat{\nu_1}(r)}{\widehat{\om}(r)^{p-\e}}
    +\e\DD_p(\om,\nu)\int_0^r\frac{\widehat{\nu_1}(s)}{\widehat{\om}(s)^{p+1-\e}}\om(s)\,ds,
    \end{split}
    \end{equation*}
and since Fubini's theorem yields
    \begin{equation*}
    \begin{split}
    \int_0^r\frac{\widehat{\nu_1}(s)}{\widehat{\om}(s)^{p+1-\e}}\om(s)\,ds
    &=\int_0^r\frac{\int_s^1\nu_1(t)\,dt}{\widehat{\om}(s)^{p+1-\e}}\om(s)\,ds\\
    &=\int_0^r\frac{\int_s^r\nu_1(t)\,dt}{\widehat{\om}(s)^{p+1-\e}}\om(s)\,ds
    +\widehat{\nu_1}(r)\int_0^r\frac{\om(s)}{\widehat{\om}(s)^{p+1-\e}}\,ds\\
    &=\int_0^r\nu_1(t)\left(\int_0^t\frac{\om(s)}{\widehat{\om}(s)^{p+1-\e}}\,ds\right)\,dt
    +\widehat{\nu_1}(r)\int_0^r\frac{\om(s)}{\widehat{\om}(s)^{p+1-\e}}\,ds\\
    &=\frac{1}{p-\e}\left(\int_0^r\frac{\nu_1(t)}{\widehat{\om}(t)^{p-\e}}\,dt
    -\frac{1}{\widehat{\om}(0)^{p-\e}}\int_0^r\nu_1(t)\,dt
    +\frac{\widehat{\nu_1}(r)}{\widehat{\om}(r)^{p-\e}}
    -\frac{\widehat{\nu_1}(r)}{\widehat{\om}(0)^{p-\e}}\right)\\
    &\le\frac{1}{p-\e}\left(\int_0^r\frac{\nu_1(t)}{\widehat{\om}(t)^{p-\e}}\,dt
    +\frac{\widehat{\nu_1}(r)}{\widehat{\om}(r)^{p-\e}}\right),
    \end{split}
    \end{equation*}
we deduce
    $$
    \int_0^r\frac{\nu_1(t)}{\widehat{\om}(t)^{p-\e}}\,dt
    \le\frac{\DD_p(\om,\nu)\left(1+\frac{\e}{p-\e}\right)}{1-\frac{\DD_p(\om,\nu)\e}{p-\e}}\frac{\widehat{\nu_1}(r)}{\widehat{\om}(r)^{p-\e}}
    =\frac{p\DD_p(\om,\nu)}{p-\e(1+\DD_p(\om,\nu))}\frac{\widehat{\nu_1}(r)}{\widehat{\om}(r)^{p-\e}}
    $$
for $\e\in\left(0,\frac{p}{\DD_p(\om,\nu)+1}\right)$. The assertion follows.
\end{proof}

\par With these preparations we are ready for the proof.

\medskip

\begin{Prf}{\em{Theorem~\ref{Theorem:T_w-bounded-intro}.}}
Obviously, (i) implies (ii), and (ii) implies (iii)   together with the inequality
 $
    \DD_p(\om,\nu)\lesssim\|T_\om\|^p_{A^p_\nu\to L^p_\nu}
    $
follows from
 the case $q=p$ of Proposition~\ref{Theorem:T_w-boundedpq3 weights}.

By \cite[Theorem~1]{PelRat2014}, $\mu_{p,\om,\nu}$ is a $q$-Carleson measure for $A^q_\nu$ if and only if $\mu_{p,\om,\nu}(S(a))\lesssim\nu(S(a))$ for all Carleson squares $S(a)=\{z\in\D:|\arg z-\arg a|<\frac{1-|a|}{2},\,|z|\ge 1-|a|\}$ with $a\in\D\setminus\{0\}$. But Fubini's theorem yields
    \begin{equation}\label{CM-T_w}
    \begin{split}
    \int_{|a|}^1\widehat{\om}(t)^{p-1}\om(t)\int_0^{t}\frac{\nu(s)s\,ds}{\widehat{\om}(s)^p}\,dt
    &=\int_{|a|}^1\widehat{\om}(t)^{p-1}\om(t)
    \left(\int_0^{{|a|}}+\int_{|a|}^{t}\right)\frac{\nu(s)s\,ds}{\widehat{\om}(s)^p}\,dt\\
    &=\left(\int_{|a|}^1\widehat{\om}(t)^{p-1}\om(t)\,dt\right)\int_0^{|a|}\frac{\nu(s)s\,ds}{\widehat{\om}(s)^p}\\
    &\quad+\int_{|a|}^1\frac{\nu(s)s}{\widehat{\om}(s)^p}\int_s^1\widehat{\om}(t)^{p-1}\om(t)\,dt\,ds\\
    &=\frac{\widehat{\om}(a)^p}{p}\int_0^{|a|}\frac{\nu(s)s\,ds}{\widehat{\om}(s)^p}+\frac{1}{p}\int_{|a|}^1\nu(s)s\,ds,
    \end{split}
    \end{equation}
and it follows that (iii) and (iv) are equivalent.

Assume (iii), and let first $0<p\le1$. By Lemma~\ref{Lemma:radial-maximal-function-weighted} and Fubini's theorem,
    \begin{equation*}
    \begin{split}
    \int_0^1\left(T_\om^N(f)(re^{i\theta})\right)^p\nu(r)r\,dr
    &=\int_0^1\left(\int_r^1N(f)(te^{i\theta})\om(t)\,dt\right)^p\frac{\nu(r)r}{\widehat{\om}(r)^p}\,dr\\
    &\lesssim\int_0^1\left(\int_r^1(N(f))^\star(te^{i\theta})^p\widehat{\om}(t)^{p-1}\om(t)t\,dt\right)\frac{\nu(r)r}{\widehat{\om}(r)^p}\,dr\\
    &=\int_0^1N(f)^p(te^{i\theta})\widehat{\om}(t)^{p-1}\om(t)\left(\int_0^t\frac{\nu(r)r}{\widehat{\om}(r)^p}\,dr\right)t\,dt.
    \end{split}
    \end{equation*}
An integration with respect to $\theta$ and the Hardy-Littlewood maximal theorem~\cite[Theorem~3.1, p.~57]{Garnett1981}
(or \cite[Lemma~4.4]{PR2014Memoirs}) together with (iii) now yield
    \begin{equation*}
    \begin{split}
    \|T_\om^N(f)\|_{L^p_\nu}^p&\lesssim\int_{\D}|f(z)|^p\widehat{\om}(z)^{p-1}\om(z)\left(\int_0^{|z|}\frac{\nu(r)r}{\widehat{\om}(r)^p}\,dr\right)\,dA(z)\\
    &=\int_\D|f(z)|^p\,d\mu_{p,\om,\nu}(z)\le\|Id\|_{A^p_\nu\to L^p_{\mu_{p,\om,\nu}}}^p\|f\|_{A^p_\nu}^p.
    \end{split}
    \end{equation*}
Thus $T_\om^N:A^p_\om\to L^p_\om$ is bounded. Moreover, the estimates above together with \cite[Theorem~3]{PelRatSie2015} and \eqref{CM-T_w} give
    $$
    \|T\|^p_{A^p_\nu\to L^p_\nu}\le\|T^N\|^p_{A^p_\nu\to L^p_\nu}\lesssim\|Id\|_{A^p_\nu\to L^p_{\mu_{p,\om,\nu}}}^p\asymp\DD_p(\om,\nu),
    $$
provided $0<p\le1$.

Let now $1<p<\infty$, and fix $\e=\e(p,\om,\nu)\in\left(0,\min\left\{p-1,\frac{p}{\DD_p(\om,\nu)}\right\}\right)$. Define $h(z)=\om(z)^{\frac1{p'}}\widehat{\om}(z)^\frac{-p+1+\e}{p}$ for all $z\in\D$, and set $\Omega=\{z\in\D:\om(z)>0\}$. Then H\"older's inequality, Fubini's theorem and the Hardy-Littlewood maximal theorem yield
    \begin{equation*}
    \begin{split}
    \|T_\om^N(f)\|_{L^p_\nu}^p
    &=\int_\D\left(\int_{|z|}^1N(f)\left(t\frac{z}{|z|}\right)\om(t)\chi_{\Omega}(t)\,dt\right)^p\frac{\nu(z)}{\widehat{\om}(z)^p}\,dA(z)\\
    &=\int_\D\left(\int_{|z|}^1N(f)\left(t\frac{z}{|z|}\right)\frac{\om(t)}{h(t)}h(t)\chi_{\Omega}(t)\,dt\right)^p\frac{\nu(z)}{\widehat{\om}(z)^p}\,dA(z)\\
    &\le\int_\D
    \left(\int_{|z|}^1N^p(f)\left(t\frac{z}{|z|}\right)\left(\frac{\om(t)}{h(t)}\right)^p\chi_{\Omega}(t)\,dt\right)
    \left(\int_{|z|}^1h(s)^{p'}\,ds\right)^{p-1}\frac{\nu(z)}{\widehat{\om}(z)^p}\,dA(z)\\
    &\asymp \int_0^1 \int_0^{2\pi} \left(\int_{r}^1 N^p(f)\left(te^{i\theta}\right)\left(\frac{\om(t)}{h(t)}\right)^p\chi_{\Omega}(t)\,dt\right)
    \left(\int_{r}^1h(s)^{p'}\,ds\right)^{p-1}\frac{\nu(r)}{\widehat{\om}(r)^p}
    \,r d\theta dr\\
    &=\int_0^1 \int_0^{2\pi}
    N^p(f)\left(te^{i\theta}\right)\,d\t \left(\frac{\om(t)}{h(t)}\right)^p\chi_{\Omega}(t)
    \int_0^t\left(\int_{r}^1h(s)^{p'}\,ds\right)^{p-1}\frac{\nu(r)}{\widehat{\om}(r)^p}r\,dr\,\,dt\\
    &\lesssim
    \int_0^1\int_0^{2\pi}|f(te^{i\t})|^p\,d\t \,\left(\frac{\om(t)}{h(t)}\right)^p\chi_{\Omega}(t)\int_0^t\left(\int_{r}^1h(s)^{p'}\,ds\right)^{p-1}\frac{\nu(r)}{\widehat{\om}(r)^p}r\,dr\,dt\\
    &\asymp
    \int_\D|f(z)|^p\left(\frac{\om(z)}{h(z)}\right)^p\chi_{\Omega}(z)\int_0^{|z|}\left(\int_{r}^1h(s)^{p'}\,ds\right)^{p-1}\frac{\nu(r)}{\widehat{\om}(r)^p}r\,dr\,dA(z)\\
    &=\left(\frac{p-1}{\e}\right)^{p-1}\int_\D|f(z)|^p\om(z)\widehat{\om}(z)^{p-1-\e}\int_0^{|z|}\frac{\nu(r)}{\widehat{\om}(r)^{p-\e}}r\,dr\,dA(z)\\
    &\lesssim\int_\D|f(z)|^p\,d\mu_{p-\e,\om,\nu}(z).
    \end{split}
    \end{equation*}
Now that $\mu_{p-\e,\om,\nu}$ is a $p$-Carleson measure for $A^p_\om$ by \eqref{CM-T_w} and Lemma~\ref{Lemma:self-improving-radial-w}, it follows that $T_\om^N:A^p_\nu\to L^p_\nu$ is bounded. Moreover, a reasoning similar to that in the case $0<p\le1$ together with Lemma~\ref{Lemma:self-improving-radial-w} gives
    $$
    \|T_\om\|^p_{A^p_\nu\to L^p_\nu}\le\|T_\om^N\|^p_{A^p_\nu\to L^p_\nu}\lesssim\|Id\|_{A^p_\nu\to L^p_{\mu_{p-\e,\om,\nu}}}^p\asymp\DD_{p-\e}(\om,\nu)\asymp\DD_p(\om,\nu).
    $$
The proof is complete.
\end{Prf}

\medskip

\begin{Prf} \emph{Corollary~\ref{corollary:A^p-L^p-the same}.}
Since $\om,\nu\in\R$, the result follows joining \cite[Theorem 11]{PelRatKernels}, Theorem~\ref{Theorem:T_w-bounded-L^p-case} and Theorem~\ref{Theorem:T_w-bounded-intro}.
\end{Prf}

\section{Weak type inequalities}

We use the conventions $0\cdot\infty=0$ and $1/0=\infty$. The next proof follows ideas from \cite{AndersenMuckStudia82}.

\medskip

\begin{Prf} \emph{Theorem~\ref{Theorem:T_w-bounded-weak-L^p-case}.}
Assume first (i), that is,
    \begin{equation}\label{5}
    \la^p \eta(\{z \in \D : |T_\om(f)(z)| > \la\}) \lesssim \|f\|^p_{L^p_\nu}, \quad \la > 0, \quad f \in L^p_\nu.
    \end{equation}
Let $h(r)=\left(\int_r^1\left(\frac{\om(s)}{s\nu(s)}\right)^{p'}s\nu(s)\,ds\right)^{\frac1{p'}}$ and denote
    $$
    N(t,r)=\left(\frac{\int_t^r \eta(s)s\,ds}{\widehat{\om}(t)^p}\right)^\frac1p
    \left(\int_r^1 \left(\frac{\om(s)}{s\nu(s)}\right)^{p'}s\nu(s)\,ds\right)^{\frac1{p'}},\quad 0\le t<r<1,
    $$
for short. Let us rule out the trivial cases $h(r)=0$ or $h(r)=\infty$ first. If $h(r)=0$, then $N(t,r)=0$ for all $0\le t<r$ by the convention. If $h(r)=\infty$, then $\left( \om (s\nu)^{-1}\right)^{1/p}\notin L^{p'}_\om(r,1)$, and hence there exists $g\in L^{p}_\om(r,1)$ such that $g\left( \om (s\nu)^{-1}\right)^{1/p}\notin L^{1}_\om(r,1)$. Define $f(z)=g(|z|)\left(\om(z)(|z|\nu(z))^{-1}\right)^{\frac1p}$ for $r\le|z|<1$, and let $f\equiv0$ elsewhere on $\D$. Then $T_\om (f)(z)=\infty$ for $|z|\le r$, and therefore the weak inequality \eqref{5} yields
    \begin{equation*}
    \begin{split}
    \int_0^r s\eta(s)\,ds
    &\le\int_{\left\{z\in\D: |T_\om(f)(z)|>\lambda \right\}} \eta(z)\,dA(z)\\
    &\lesssim\frac{1}{\lambda^p}\int_{\left\{z\in\D:\, r\le |z|<1\right\}} |f(z)|^p\nu(z)\,dA(z)
    =\frac{2}{\lambda^p} \int_r^1 |g(s)|^p\om(s)\,ds,\quad \lambda>0.
    \end{split}
    \end{equation*}
By letting $\lambda\to\infty$, we deduce $\int_0^r\eta(s)s\,ds=0$, and thus $N(t,r)=0$ for all $0\le t<r$ in this case also.

Assume that $0<h(r)<\infty$, and let $f_r(z)=\left(\frac{\om(z)}{|z|\nu(z)}\right)^{p'/p}\chi_{\D\setminus D(0,r)}(z)$, $z\in\D$, and
    $$
    \la_{r,t} = \frac{\int_r^1 \left(\frac{\om(s)}{s\nu(s)}\right)^{p'}s\nu(s)\,ds}{\widehat{\om}(t)},\quad0\le t\le r<1.
    $$
Then
    $$
    \|f_r\|^p_{L^p_\nu} = 2\int_r^1\left(\frac{\om(s)}{s\nu(s)}\right)^{p'}\nu(s)s\,ds
    $$
and
    \begin{equation*}
    \begin{split}
    T_\om(f_r)(z)
    &= \widehat{\om}(z)^{-1} \int_{|z|}^1 \chi_{[r,1)}(s)\left(\frac{\om(s)}{s\nu(s)}\right)^{p'/p}\om(s)\,ds \\
    &= \widehat{\om}(z)^{-1} \int_r^1 \left(\frac{\om(s)}{s\nu(s)}\right)^{p'}s\nu(s)\,ds \ge \la_{r,t},
    \quad  0 \leq t < |z| \leq r < 1.
    \end{split}
    \end{equation*}
Therefore
    \begin{equation*}
    \begin{split}
    \int_t^r \eta(s)s\,ds
    &\le2\eta(\{z \in \D : |T_\om(f_r)(z)|\ge\la_{r,t}\})
    \le2\|T_\om\|_{L^p_\nu \to L^{p,\infty}_\eta}^p\frac{\|f_r\|^p_{L^p_\nu}}{\la_{r,t}^p} \\
    &\asymp\|T_\om\|_{L^p_\nu \to L^{p,\infty}_\eta}^p \frac{\widehat{\om}(t)^p \int_r^1 \left(\frac{\om(s)}{s\nu(s)}\right)^{p'}s\nu(s)\,ds}{\left(\int_r^1 \left(\frac{\om(s)}{s\nu(s)}\right)^{p'}s\nu(s)\,ds\right)^p} \\
    &=\|T_\om\|_{L^p_\nu \to L^{p,\infty}_\eta}^p\frac{\widehat{\om}(t)^p}{\left(\int_r^1 \left(\frac{\om(s)}{s\nu(s)}\right)^{p'}s\nu(s)\,ds\right)^{\frac{p}{p'}}},
    \end{split}
    \end{equation*}
and thus
    $$
   \frac{\int_t^r \eta(s)s\,ds}{\widehat{\om}(t)^p} \left(\int_r^1 \left(\frac{\om(s)}{s\nu(s)}\right)^{p'}s\nu(s)\,ds\right)^{p/p'}
   \lesssim\|T_\om\|_{L^p_\nu \to L^{p,\infty}_\eta}^p, \quad 0 \leq t < r < 1.
   $$
This implies (ii) and $N_{p}(\om,\nu,\eta)\lesssim\|T_\om\|_{L^p_\nu\to L^{p,\infty}_\eta}$.

Assume next (ii). Then, for each $\e>0$, we have
 \begin{equation*}
    \begin{split}
    N_{p}^p(\om,\nu,\eta)\frac{1}{\e}\left(\frac{1}{\widehat{\om}(r)^\ep}- \frac{1}{\widehat{\om}(0)^\ep} \right)
    &=N_{p}^p(\om,\nu,\eta)\int_0^r \frac{\om(t)}{\widehat{\om}(t)^{1+\ep}}\,dt\\
    &\ge h^p(r)\int_0^r\left(\int_t^r\eta(s)s\,ds\right)\frac{\om(t)}{\widehat{\om}(t)^{1+\ep+p}}\,dt\\
    &=h^p(r)\int_0^r\left(\int_0^s \frac{\om(t)}{\widehat{\om}(t)^{1+\ep+p}}\,dt\right)\eta(s)s\,ds\\
    &=\frac{h^p(r)}{p+\ep}\int_0^r\left(\frac{1}{\widehat{\om}(s)^{\ep+p}}-\frac{1}{\widehat{\om}(0)^{\ep+p}}\right)\eta(s)s\,ds,
    \end{split}
    \end{equation*}
from which (iii) and the inequality $N_{p}(\om,\nu,\eta)\gtrsim M_{p,\e}(\om,\nu,\eta)$ for each fixed $\e>0$ follow.

Assume now (iii), and let $\e>0$. Let $f$ be a compactly supported non-negative step function on $\D$, that is, $f=\sum_{j=1}^l P_j\chi_{R_j}$,
where $P_j\ge 0$ and $R_j=\{re^{i\theta}:0\le A_j\le r\le B_j<1,\, c_j\le\theta\le d_j  \}$ with $d_j-c_j\le2\pi$. Define $E_\theta(\lambda)=\{r \in (0,1):T_\om(f)(re^{i\theta})>\lambda\}$ for any $\theta\in[0,2\pi)$ and $\lambda>0$, and $H(r)=\int_0^r\frac{s\eta(s)}{\widehat{\om}(s)^{p+\ep}}\,ds$ for all $0\le r<1$.
If $r_0=\inf\{r\in(0,1):h(r)<\infty\}>0$, then $\eta(r)=0$ almost everywhere on $[0,r_0]$. Therefore $\int_{E_\theta(\lambda)}\eta(s)\,ds = \int_{E_\theta(\lambda)\cap(r_0,1)}\eta(s)\,ds$, where
    $$
    E_\theta(\lambda)\cap(r_0,1) = \bigcup_{k=1}^n (a_k,b_k),
    \quad r_0 \leq a_1 < \cdots < b_k \leq a_{k+1} < \cdots < b_n < 1.
    $$
Let $r\in [a_k,b_k]$ for some $k=1,\ldots,n$. Then $\widehat{\om}(r)\le\lambda^{-1}\int_r^1f(se^{i\theta})\om(s)\,ds$, and H\"older's inequality yields
    \begin{equation}\label{Eq:weak-ineq-what-estimate}
    \begin{split}
    \widehat{\om}(r)^p
    &\le\lambda^{-p} \left(\int_{r}^{1}f(se^{i\theta})h(s)^{1/p}(s\nu(s))^{1/p}h(s)^{-1/p}\frac{\om(s)}{(s\nu(s))^{1/p}}\,ds\right)^p \\
    &\le\lambda^{-p} \int_{r}^{1} f(se^{i\theta})^p h(s)s\nu(s)\,ds \left(\int_{r}^{1} h(s)^{-\frac{1}{p-1}}\left(\frac{\om(s)}{s\nu(s)}\right)^{p'}s\nu(s)\,ds\right)^{p-1} \\
    &= \lambda^{-p} \int_{r}^{1}f(se^{i\theta})^p h(s)s\nu(s)\,ds \left(-p'\,[h(s)]_{s=r}^1\right)^{p-1} \\
    &=(p')^{p-1} \lambda^{-p} h(r)^{p-1} \int_{r}^{1}f(se^{i\theta})^p h(s)s\nu(s)\,ds.
    \end{split}
    \end{equation}
This inequality will be repeatedly used throughout the rest of the proof.

We now proceed to estimate $\int_{E_\theta(\lambda)}s\eta(s)\,ds$. An integration by parts, the assumption $M_{p,\e}<\infty$
and another integration by parts yield
    \begin{equation*}
    \begin{split}
    \int_{a_k}^{b_k} s\eta(s)\,ds
    &= \int_{a_k}^{b_k} \widehat{\om}(s)^{p+\e} \frac{s\eta(s)}{\widehat{\om}(s)^{p+\e}}\,ds \\
    & =\left[ \widehat{\om}(s)^{p+\e}H(s)\right]^{b_k}_{a_k}
    +(p+\e)\int_{a_k}^{b_k} \widehat{\om}(s)^{p+\e-1}\om(s)H(s)\,ds
    \\ & \le \left[ \widehat{\om}(s)^{p+\e}H(s)\right]^{b_k}_{a_k}
    +(p+\e)M^p_{p,\e} \int_{a_k}^{b_k} \widehat{\om}(s)^{p-1}\om(s)h^{-p}(s)\,ds
    \\ & = \left[ \widehat{\om}(s)^{p+\e}H(s)
    -\frac{(p+\e)}{p}M^p_{p,\e}\widehat{\om}(s)^{p}h^{-p}(s)\right]^{b_k}_{a_k}\\
    &\quad-(p+\e)M^p_{p,\e}\int_{a_k}^{b_k} \widehat{\om}(s)^{p}h^{-p-1}(s)h'(s)\,ds
    \end{split}
    \end{equation*}
for each $k=1,\ldots,n$. Thus $\int_{E_\theta(\lambda)}\eta(s)\,ds \leq S_{1,\theta} + S_{2,\theta}$, where
    \begin{equation}\label{eq:s1}
    S_{1,\theta} = \sum_{k=1}^n\left[ \widehat{\om}(s)^{p+\e}H(s)
    -\frac{(p+\e)}{p}M^p_{p,\e}\widehat{\om}(s)^{p}h^{-p}(s)\right]^{b_k}_{a_k}
    \end{equation}
and
    \begin{equation}\label{eq:s2}
    S_{2,\theta} = -(p+\e)M_{p,\e}^p \sum_{k=1}^n \int_{a_k}^{b_k} \widehat{\om}(s)^{p}h^{-p-1}(s)h'(s)\,ds .
     \end{equation}
The sums $S_{1,\theta}$ and $S_{2,\theta}$ will now be considered separately. Since $H$ is nondecreasing,
    \begin{equation}
    \begin{split}\label{eq:weak0}
    &\sum_{k=1}^n\left[ \widehat{\om}(s)^{p+\e}H(s)\right]^{b_k}_{a_k}\\
    &=-\widehat{\om}(a_1)^{p+\e}H(a_1)
    +\sum_{k=1}^{n-1}\left(\widehat{\om}(b_k)^{p+\e}H(b_k)- \widehat{\om}(a_{k+1})^{p+\e}H(a_{k+1}) \right)
    +\widehat{\om}(b_n)^{p+\e}H(b_n)
    \\ & \le \sum_{k=1}^{n-1}\left(\widehat{\om}(b_k)^{p+\e}- \widehat{\om}(a_{k+1})^{p+\e} \right)H(b_k)
    +\widehat{\om}(b_n)^{p+\e}H(b_n),
    \end{split}
    \end{equation}
where the first negative term was simply discarded. Next, observe that for any $N>M>0$ and $a>0$, the function $g(x)=M(a^N-x^N)-Na^{N-M}(a^M-x^M)$ is nondecreasing on $[0,a]$. Hence
    $$
    \widehat{\om}(b_k)^{p+\e}- \widehat{\om}(a_{k+1})^{p+\e}\le \frac{p+\ep}{p}  \widehat{\om}(b_k)^{\e}
    \left( \widehat{\om}(b_k)^{p}- \widehat{\om}(a_{k+1})^{p}\right),\quad k=1,\dots,n-1.
    $$
This together with \eqref{eq:weak0}  implies
    \begin{equation*}
    \begin{split}
    \sum_{k=1}^n\left[ \widehat{\om}(s)^{p+\e}H(s)\right]^{b_k}_{a_k}
    &\le\frac{p+\e}{p} \sum_{k=1}^{n-1}\left(\widehat{\om}(b_k)^{p}- \widehat{\om}(a_{k+1})^{p} \right)\widehat{\om}(b_k)^{\e} H(b_k)
    +\widehat{\om}(b_n)^{p+\e}H(b_n)\\
    &\le M^p_{p,\e}\frac{p+\e}{p} \sum_{k=1}^{n-1}\left(\widehat{\om}(b_k)^{p}- \widehat{\om}(a_{k+1})^{p} \right) h^{-p}(b_k)
    + M^p_{p,\e}\widehat{\om}(b_n)^{p}h^{-p}(b_n).
    \end{split}
    \end{equation*}
By joining this inequality with \eqref{eq:s1}, we deduce
    \begin{equation}
    \begin{split}\label{eq:s1prima}
    S_{1,\theta}
    &\le M^p_{p,\e}\frac{p+\e}{p}\left(\widehat{\om}(a_{1})^{p}h^{-p}(a_1)+
    \sum_{k=1}^{n-1}\left(h^{-p}(a_{k+1})-  h^{-p}(b_k)\right)  \widehat{\om}(a_{k+1})^{p}\right).
    \end{split}
    \end{equation}
Next, use the monotonicity of the auxiliary function $g$ to deduce
    $$
    h^{-p}(a_{k+1})-  h^{-p}(b_k)\le ph^{-p+1}(a_{k+1})\left[ h^{-1}(a_{k+1})-  h^{-1}(b_k)\right], \quad k=1,\dots,n-1,
    $$
which together with \eqref{eq:s1prima} and \eqref{Eq:weak-ineq-what-estimate} yields
    \begin{equation}
    \begin{split}\label{eq:s1primaa}
    S_{1,\theta}
    &\le M^p_{p,\e}\frac{p+\e}{p}\left(\widehat{\om}(a_{1})^{p}h^{-p}(a_1)+
    p\sum_{k=1}^{n-1}\left(h^{-1}(a_{k+1})-  h^{-1}(b_k)\right) h^{1-p}(a_{k+1}) \widehat{\om}(a_{k+1})^{p}\right)\\
    &\le M^p_{p,\e}\frac{p+\e}{p}(p')^{p-1}\lambda^{-p}
    h^{-1}(a_1)\int_{a_1}^1f(se^{i\theta})^p h(s)s\nu(s)\,ds\\
    &\quad+M^p_{p,\e}(p+\e)(p')^{p-1}\lambda^{-p}\sum_{k=1}^{n-1}\left(h^{-1}(a_{k+1})-  h^{-1}(b_k)\right)\int_{a_{k+1}}^1f(se^{i\theta})^p h(s)s\nu(s)\,ds.
    \end{split}
    \end{equation}
To estimate the sum $S_{2,\theta}$, use \eqref{Eq:weak-ineq-what-estimate} and integrate by parts to obtain
    \begin{equation*}
    \begin{split}
    \frac{ S_{2,\theta}}{(p+\e)M^p_{p,\e}}
    &=\sum_{k=1}^n\int_{a_k}^{b_k} \widehat{\om}(s)^p h(s)^{-p-1}(-h'(s))\,ds\\
    &\le(p')^{p-1} \lambda^{-p} \sum_{k=1}^n \int_{a_k}^{b_k} h(s)^{-2}(-h'(s))  \left( \int_s^1f(se^{i\theta})^p h(t)t\nu(t)\,dt\right)\,ds \\
    &= (p')^{p-1} \lambda^{-p} \sum_{k=1}^n\Bigg(\left[ h(s)^{-1} \int_s^1f(se^{i\theta})^p h(t)t\nu(t)\,dt\, \right]_{a_k}^{b_k}\\
    &\quad-\int_{a_k}^{b_k} \left(-f(se^{i\theta})^p h(s)s\nu(s)\right) h(s)^{-1}\,ds\Bigg)\\
    &=(p')^{p-1} \lambda^{-p} \sum_{k=1}^n\Bigg(\left[h(s)^{-1}\int_s^1f(se^{i\theta})^p h(t)t\nu(t)\,dt\right]_{a_k}^{b_k}\\
    &\quad+\int_{a_k}^{b_k}f(se^{i\theta})^ps\nu(s)\,ds\Bigg).
    \end{split}
    \end{equation*}
By using that $p>1$ and the monotonicity of $h$, and by combining the above inequality with \eqref{eq:s1primaa}, it follows that
    \begin{equation*}\label{Eq:weak-ineq-main-estimate2}
    \begin{split}
   \frac{ \int_{E_\theta(\lambda)}s\eta(s)\,ds}{(p+\e)(p')^{p-1}\lambda^{-p} M_{p,\e}^p}
   &\le \sum_{k=1}^n \int_{a_k}^{b_k}f(se^{i\theta})^p s\nu(s)\,ds+
   \sum_{k=1}^{n-1} h^{-1}(b_k)\int_{b_k}^{a_{k+1}}f(se^{i\theta})^p h(s) s\nu(s)\,ds\\
   &\quad+ h^{-1}(b_n)\int_{b_n}^{1}f(se^{i\theta})^p h(s) s\nu(s)\,ds\\
   &\le\sum_{k=1}^n \int_{a_k}^{b_k}f(se^{i\theta})^p s\nu(s)\,ds
   +\sum_{k=1}^{n-1} \int_{b_k}^{a_{k+1}}f(se^{i\theta})^p  s\nu(s)\,ds\\
   &\quad+ \int_{b_n}^{1} f(se^{i\theta})^p  s\nu(s)\,ds\\
   &\le\int_{a_1}^{1}f(se^{i\theta})^p s\nu(s)\,ds.
    \end{split}
    \end{equation*}
By integrating this with respect to $\theta$, we deduce
    \begin{equation}\label{eq:final}
    \eta\left(\left\{z\in\D:T_\om(f)(z)\ge\lambda\right\}\right)\lesssim\lambda^{-p}M^p_{p,\ep}  \| f\|^p_{L^p_\nu},\quad \lambda>0,
    \end{equation}
for each non-negative compactly supported step function $f$. It remains to deduce this inequality for arbitrary $f\in L^p_\nu$. We will accomplish this in a couple steps, starting with simple functions.

Let $\lambda>0$, and let $f\neq 0$ be a non-negative simple function,   that is, of the form $f = \sum_{n=1}^N \alpha_n \chi_{E_n}$, where $\alpha_n \geq 0$ are non-negative constants and $E_n$ are disjoint measurable subsets of $\D$. Then, by the proof of \cite[Theorem~2.4 p. 71]{SteinShak} and \cite[Corollary~2.3 p. 71]{SteinShak},  there exists a sequence of non-negative (compactly supported) step functions $\varphi_k$ with their range contained in $\{\alpha_n : n=1,\ldots,N\}\cup\{0\}$ and converging to $f$ in $L^p_\nu$ and pointwise almost everywhere in~$\D$. Let $\d>0$ be arbitrary and denote $M=\max\{\alpha_n : n=1,\ldots,N\}$. Let $R\in(0,1)$ such that $\int_R^1 \eta(s)s\,ds < \frac{\d\lambda}{32M}$. By Egorov's theorem there exists a measurable set $E\subset\D$ such that $\om_1(\D\setminus E) < \frac{\d\lambda\widehat{\om}(R)}{16M\int_0^1 \eta(s)s\,ds}$ and $\varphi_k \to f$ uniformly on $E$, where $\om_1(z) = \frac{\om(z)}{|z|}$ for $z\in \D\setminus\{0\}$ is a finite measure on $\D$ because $\widehat{\om}(0) < \infty$. Then
    \begin{equation*}
    \begin{split}
    \widehat{\om}(z)|T_\om(\varphi_k)(z) - T_\om(f)(z)|
    &\leq \int_{|z|}^1\left|\varphi_k\left(s\frac{z}{|z|}\right) - f\left(s\frac{z}{|z|}\right)\right| \chi_E\left(s\frac{z}{|z|}\right) \om(s)\,ds \\
    &\quad+ 2M\int_{|z|}^1 \chi_{\D\setminus E}\left(s\frac{z}{|z|}\right) \om(s)\,ds
    \end{split}
    \end{equation*}
for all $z\in\D$. Since $\sup_{z\in\D} \left|\varphi_k(z) - f(z)\right| \chi_E(z) \to 0$ as $k\to\infty$,
for fixed $\lambda>0$
there exists $k_0\in\N$ such that $\sup_{z\in\D} \left|\varphi_k(z) - f(z)\right| \chi_E(z)<\frac{\lambda}{4}$ for all $k\ge k_0$.
So, for every point $z\in\D$ such that $\inf_{k\ge k_0}|T_\om(\varphi_k)(z) - T_\om(f)(z)| > \lambda/2$, it follows that
    \begin{equation*}
    \frac{2M}{\widehat{\om}(z)}\int_{|z|}^1 \chi_{\D\setminus E}\left(s\frac{z}{|z|}\right) \om(s)\,ds
    > \frac\lambda4.
    \end{equation*}
Thus
    \begin{equation*}
    \begin{split}
    \eta\left(\left\{z\in\D : |T_\om(\varphi_k)(z) - T_\om(f)(z)| > \lambda/2\right\}\right)
    &\leq \frac{8M}\lambda \int_\D \left(\int_{|z|}^1 \chi_{\D\setminus E}\left(s\frac{z}{|z|}\right) \om(s)\,ds \right)\frac{\eta(z)}{\widehat{\om}(z)}\,dA(z) \\
    &= \frac{8M}\lambda \int_0^1 \om_1((\D\setminus E)\setminus D(0,r))\,\frac{r\eta(r)}{\widehat{\om}(r)}\,dr
    \end{split}
    \end{equation*}
for $k\ge k_0$. Since
    \begin{equation*}
    \begin{split}
    \int_0^1 \om_1((\D\setminus E)\setminus D(0,r))\,\frac{r\eta(r)}{\widehat{\om}(r)}\,dr
    &\leq \frac{\om_1(\D\setminus E)}{\widehat{\om}(R)} \int_0^R\eta(r)r\,dr
    + 2 \int_R^1 \eta(r)r\,dr
    < \frac{\d\lambda}{8M},
    \end{split}
    \end{equation*}
we obtain $\eta\left(\left\{z\in\D : |T_\om(\varphi_k)(z) - T_\om(f)(z)| > \lambda/2\right\}\right) < \d$ for $k\ge k_0$.
Then take $\d=\frac{M^p_{p,\ep}\| f\|^p_{L^p_\nu}}{\lambda^p}$ and $k_1\ge k_0$ such that $\|\varphi_k\|_{L^p_\nu}\le 2 \| f\|_{L^p_\nu}$ for all $k \geq k_1$.
Then, bearing in mind \eqref{eq:final}, we deduce
    \begin{equation*}
    \begin{split}
    \eta(\{z\in\D : |&T_\om(f)(z)| \geq \lambda\}) \\
    &= \eta\left(\left\{z\in\D : |T_\om(f)(z)| \geq \lambda\right\}\cap\left\{z\in\D : |T_\om(\varphi_k)(z) - T_\om(f)(z)| \leq \lambda/2\right\}\right) \\
    &\quad+ \eta\left(\left\{z\in\D : |T_\om(f)(z)| \geq \lambda\right\}\cap\left\{z\in\D : |T_\om(\varphi_k)(z) - T_\om(f)(z)| > \lambda/2\right\}\right) \\
    &\leq \eta\left(\left\{z\in\D : |T_\om(\varphi_k)(z)| \geq \lambda/2\right\}\right) + \d \\
    &\lesssim M^p_{p,\ep}\lambda^{-p}\|\varphi_k\|^p_{L^p_\nu} + \frac{M^p_{p,\ep}\| f\|^p_{L^p_\nu}}{\lambda^p} \\
    &\lesssim M^p_{p,\ep}\lambda^{-p}\|f\|^p_{L^p_\nu}
    \end{split}
    \end{equation*}
for all $k\ge k_1$. Thus, we deduce that~\eqref{eq:final} holds for simple functions.

Finally, let $f\in L^p_\nu$ be arbitrary. Since $\left\{z\in\D:|T_\om(f)(z)|>\lambda \right\}\subset \left\{z\in\D:T_\om(|f|)(z)>\lambda \right\}$, we may assume that $f$ is non-negative. For each $N\in\N$, define $f_N = \max\{\min\{f,N\},\frac1N\}$. Then there exists a sequence of simple functions $\phi_k$ such that $N \geq \phi_k(z) \geq \phi_{k+1}(z) \geq f_N(z)$ for all $k\in\N$ and $z\in\D$ and $\phi_k \to f_N$ pointwise in $\D$ as $k\to\infty$. (Since $1/f_N$ is measurable, there exists an increasing sequence of simple functions $g_k$ such that $g_k(z)\geq\frac1N$ for all $z\in\D$, and $g_k\to f_N$ pointwise in $\D$ as $k\to\infty$. Take $\phi_k=1/g_k$. See \cite[Theorem 4.1 p. 31]{SteinShak}.) By the dominated convergence theorem there is $k_2$ such that $ \|\phi_k\|_{L^p_\nu}\le 2
 \|f_N\|_{L^p_\nu}$ for all $k\ge k_2$, which together with
\eqref{eq:final}  yields
    \begin{equation}\label{eq:weak-ineq-appr}
    \begin{split}
    \eta(\{z\in\D : T_\om(f_N)(z) > \lambda\})
    &\leq \eta(\{z\in\D : T_\om(\phi_k)(z) > \lambda\}) \\
    &\lesssim \lambda^{-p}M_{p,\e}^p \|\phi_k\|^p_{L^p_\nu} \\
    &\lesssim \lambda^{-p}M_{p,\e}^p \|f_N\|^p_{L^p_\nu} \\
    &\leq \lambda^{-p}M_{p,\e}^p \left(\|f\|^p_{L^p_\nu} + \frac{\nu(\D)}{N^p}\right), \quad \lambda>0, \quad N\in\N.
    \end{split}
    \end{equation}
Since $(\min\{f,N\})_{N\in\N}$ is an increasing sequence of non-negative measurable functions converging pointwise to $f$, $T_\om(\min\{f,N\})(z) \to T_\om(f)(z)$ for all $z\in\D$. Then, for fixed $\lambda>0$, by Egorov's theorem there is a measurable set $E\subset \D$ such that $\eta(E)<\lambda^{-p}M_{p,\e}^p \|f\|^p_{L^p_\nu}$ and $N_0$ such that $\sup_{z\in \D}|T_\om(f)(z)-T_\om(\min\{f,N\})(z)|\chi_{\D\setminus E}(z)<\frac{\lambda}{2}$ for all $N\ge N_0$. Hence
    \begin{equation*}
    \begin{split}
    &\eta(\{z\in\D : T_\om(f)(z) > \lambda\})
    \\ & = \eta(\{z\in \D\setminus E : T_\om(f)(z) > \lambda\})+ \eta(\{z\in E : T_\om(f)(z) > \lambda\})
   \\ &\le \eta(\{z\in\D : T_\om(\min\{f,N\})(z) > \lambda/2\})+ \lambda^{-p}M_{p,\e}^p \|f\|^p_{L^p_\nu}  \\
    &\leq \eta(\{z\in\D : T_\om(f_N)(z) > \lambda/2\}) + \lambda^{-p}M_{p,\e}^p \|f\|^p_{L^p_\nu}, \quad N\ge N_0,\quad\lambda>0.
    \end{split}
    \end{equation*}
Combining this with~\eqref{eq:weak-ineq-appr} and letting $N\to\infty$ yields~\eqref{eq:final} for $f\in L^p_\nu$, and thus (i) is satisfied. This finishes the proof of the theorem.
\end{Prf}
\medskip

\begin{Prf} \emph{Corollary~\ref{corollary:weird}.}
Assume that $T_\om:A^p_\nu\to L^p_\nu$ and $T_\om:L^p_\nu\to L^{p,\infty}_\nu$ are bounded. Then
    \begin{equation*}
    \DD_p(\om,\nu)=\sup_{0\le r<1}\frac{\widehat{\om}(r)^p}{\int_r^1 s\nu(s)\,ds}\int_0^r\frac{t\nu(t)}{\widehat{\om}(t)^p}\,dt<\infty
    \end{equation*}
and
    \begin{equation*}
    M_{p,\e}(\om,\nu,\nu)
    =\sup_{0\le r<1}\left(\widehat{\om}(r)^\e\int_0^r \frac{s\nu(s)}{\widehat{\om}(s)^{p+\ep}}\,ds\right)^\frac1p
    \left(\int_r^1\left(\frac{\om(s)}{s\nu(s)}\right)^{p'}s\nu(s)\,ds\right)^{\frac1{p'}}<\infty
    \end{equation*}
by Theorems~\ref{Theorem:T_w-bounded-intro} and \ref{Theorem:T_w-bounded-weak-L^p-case}. Moreover, \eqref{6new} implies
    $$
    \frac{\left(\int_r^1 s\nu(s)\,ds\right)^\frac1p}{\widehat{\om}(r)}
    \lesssim\left(\widehat{\om}(r)^\e\int_0^r\frac{s\nu(s)}{\widehat{\om}(s)^{p+\ep}}\,ds\right)^\frac1p.
    $$
Hence
    \begin{equation*}
    \begin{split}
    M_{p}(\om,\nu,\nu)
    &=\sup_{0\le r<1}\left(\int_0^r \frac{s\nu(s)}{\widehat{\om}(s)^{p}}\,ds\right)^\frac1p
    \left(\int_r^1\left(\frac{\om(s)}{s\nu(s)}\right)^{p'}s\nu(s)\,ds\right)^{\frac1{p'}}\\
   & \le \left(\DD_p(\om,\nu)\right)^\frac1p
    \sup_{0\le r<1}\frac{\left(\int_r^1 s\nu(s)\,ds\right)^{\frac{1}{p}}} {\widehat{\om}(r)}
    \left(\int_r^1\left(\frac{\om(s)}{s\nu(s)}\right)^{p'}s\nu(s)\,ds\right)^{\frac1{p'}}
   \\  & \lesssim \left(\DD_p(\om,\nu)\right)^\frac1p \sup_{0\le r<1}
    \left(\widehat{\om}(r)^\e\int_0^r\frac{s\nu(s)}{\widehat{\om}(s)^{p+\ep}}\,ds\right)^\frac1p
    \left(\int_r^1\left(\frac{\om(s)}{s\nu(s)}\right)^{p'}s\nu(s)\,ds\right)^{\frac1{p'}}
    \\ & \lesssim  \left(\DD_p(\om,\nu)\right)^\frac1p  M_{p,\e}(\om,\nu,\nu).
    \end{split}
    \end{equation*}
Thus $T_\om:L^p_\nu\to L^p_\nu$ is bounded. The converse implication is trivial.
\end{Prf}

\medskip

\begin{corollary}\label{counterexample}
Let $1<p<\infty$. For each  $\om\in\DD$ there exists $\nu\in\DD$ such that $T_\om:\, A^p_\nu\to L^p_\nu$ is bounded but $T_\om$ is not bounded from $L^p_\nu$ to $L^{p,\infty}_\nu$.
\end{corollary}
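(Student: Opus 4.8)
The plan is to exploit the structural difference between the two characterizations available to us. By Theorem~\ref{Theorem:T_w-bounded-intro}(iii), boundedness of $T_\om:A^p_\nu\to L^p_\nu$ is governed by $\DD_p(\om,\nu)$, which involves only $\widehat{\om}$ and the tails $\int_r^1 s\nu(s)\,ds$; by Theorem~\ref{Theorem:T_w-bounded-weak-L^p-case}(ii), the weak bound is governed by $N_p(\om,\nu,\nu)$, which contains the factor $\int_r^1\om(s)^{p'}(s\nu(s))^{1-p'}\,ds$ (this being the meaning of $\int_r^1(\om(s)/s\nu(s))^{p'}s\nu(s)\,ds$). Since $1-p'<0$, the latter integral is $+\infty$ as soon as $\nu$ vanishes on a set of positive measure on which $\om>0$, while such a gap is invisible to $\DD_p(\om,\nu)$. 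Accordingly, I would build $\nu$ from $\om$ by deleting a sequence of thin gaps accumulating at $\partial\D$, each removing only a geometrically small fraction of the mass of $\widehat{\om}$, so that $\widehat{\nu}\asymp\widehat{\om}$ and hence doubling (and finiteness of $\DD_p$) survive while $N_p$ becomes infinite.

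Concretely: since $\widehat{\om}(0)=\int_0^1\om>0$ and $\om$ is integrable, fix $[\alpha,\beta]\subset(0,1)$ with $\int_\alpha^\beta\om(s)\,ds>0$. Using that $\widehat{\om}$ is continuous and decreases to $0$, choose $\beta<a_1<b_1<a_2<b_2<\cdots$ with $\widehat{\om}(b_n)=(1-2^{-n-2})\widehat{\om}(a_n)$ and $a_{n+1}=(1+b_n)/2$ (so $a_n\to1$), and set $\nu=\om\,\chi_{(0,1)\setminus\bigcup_n(a_n,b_n)}$; this is a radial weight with $\nu\le\om$ and $\widehat{\nu}\le\widehat{\om}$. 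First I would check $\widehat{\nu}\asymp\widehat{\om}$: the $\widehat{\om}$-mass removed inside $(r,1)$ is at most $\sum_{n\ge m}(\widehat{\om}(a_n)-\widehat{\om}(b_n))=\sum_{n\ge m}2^{-n-2}\widehat{\om}(a_n)\le 2^{-m-1}\widehat{\om}(a_m)$ when $r\in(a_m,a_{m+1}]$, and since $\widehat{\om}(r)\ge\widehat{\om}(a_{m+1})\ge\widehat{\om}(b_m)\ge\tfrac12\widehat{\om}(a_m)$ this is $\le 2^{-m}\widehat{\om}(r)\le\tfrac12\widehat{\om}(r)$ (the range $r\le a_1$ being easier). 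Thus $\widehat{\nu}\ge\tfrac12\widehat{\om}$, so $\widehat{\nu}\asymp\widehat{\om}$ and $\nu\in\DD$ because $\om\in\DD$. Next, $\nu\le\om$ gives $\int_0^r t\nu(t)\widehat{\om}(t)^{-p}\,dt\le\int_0^r\om(t)\widehat{\om}(t)^{-p}\,dt\le(p-1)^{-1}\widehat{\om}(r)^{1-p}$, while $\int_r^1 s\nu(s)\,ds\gtrsim\widehat{\nu}(r)\asymp\widehat{\om}(r)$ for $r\ge\tfrac12$ and is bounded below for $r\le\tfrac12$; combining these, $\DD_p(\om,\nu)<\infty$, so $T_\om:A^p_\nu\to L^p_\nu$ is bounded by Theorem~\ref{Theorem:T_w-bounded-intro}.

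Finally, for the failure of the weak inequality: for any $r\in(\beta,a_1)$ the gap $(a_1,b_1)$ lies in $(r,1)$, on it $\nu\equiv0$, and $\int_{a_1}^{b_1}\om=2^{-3}\widehat{\om}(a_1)>0$ forces $\om>0$ on a subset of positive measure, so $\int_r^1\om(s)^{p'}(s\nu(s))^{1-p'}\,ds=+\infty$; on the other hand $\int_\alpha^r s\nu(s)\,ds\ge\int_\alpha^\beta s\om(s)\,ds>0$. Hence the term of $N_p(\om,\nu,\nu)$ corresponding to $(t,r)=(\alpha,r)$ is infinite, so $N_p(\om,\nu,\nu)=\infty$ and $T_\om:L^p_\nu\to L^{p,\infty}_\nu$ is unbounded by Theorem~\ref{Theorem:T_w-bounded-weak-L^p-case}. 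The only delicate point is the construction of $\nu$: it must simultaneously keep the doubling property (so the deleted gaps may not remove too much of $\widehat{\om}$) and genuinely vanish on a set where $\om$ does not; the geometric choice $\widehat{\om}(b_n)=(1-2^{-n-2})\widehat{\om}(a_n)$ is exactly what reconciles the two, and once $\widehat{\nu}\asymp\widehat{\om}$ and $\nu\le\om$ are established the finiteness of $\DD_p(\om,\nu)$ is routine.
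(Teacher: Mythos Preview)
Your approach is essentially the same as the paper's: construct $\nu$ by zeroing $\om$ on a sequence of intervals so that $\widehat{\nu}\asymp\widehat{\om}$ (whence $\nu\in\DD$ and $\DD_p(\om,\nu)<\infty$), while the vanishing of $\nu$ on a set where $\om>0$ makes the $p'$-integral in $N_p$ blow up; the paper just uses the simpler construction $t\nu(t)=\om(t)\sum_n\chi_{[r_{2n},r_{2n+1})}(t)$ with $\widehat{\om}(r_n)=K^{-n}\widehat{\om}(0)$. One slip to fix: since $a_{m+1}=(1+b_m)/2>b_m$, the inequality $\widehat{\om}(a_{m+1})\ge\widehat{\om}(b_m)$ goes the wrong way; you need the doubling hypothesis here, $\widehat{\om}(b_m)\le C\widehat{\om}((1+b_m)/2)=C\widehat{\om}(a_{m+1})$, which still yields $\widehat{\om}(r)\gtrsim\widehat{\om}(a_m)$ and hence $\widehat{\nu}\asymp\widehat{\om}$ as claimed.
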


\begin{proof}
Fix $\om\in\DD$. Take $K>1$ and $r_n$ such that $\widehat{\om}(r_n)=K^{-n}\widehat{\om}(0)$.
Take  $t\nu(t)=\om(t) \sum_{n=0}^{\infty} \chi_{[r_{2n},r_{2n+1})}(t)$. Since $\om$ is not absolutely continuous with respect to $\nu$, $T_\om$ is not bounded from $L^p_\nu$ to $L^{p,\infty}_\nu$ by Theorem~\ref{Theorem:T_w-bounded-weak-L^p-case}.
\par On the other hand, since $\int_{t}^1 s\nu(s)\,ds\asymp \widehat{\om}(t)$, it is clear that $\nu\in\DD$ and
$$\int_0^r \frac{t\nu(t)}{\widehat{\om}^p(t)}\,dt\le   \int_0^r \frac{ \om(t)dt}{\widehat{\om}(t)^p}\lesssim
 \frac{1}{\widehat{\om}(r)^{p-1}}\asymp
\frac{\int_r^1 t\nu(t)\,dt}{\widehat{\om}(r)^p},\quad 0\le r<1,$$
Thus, $T_\om \,: A^p_\nu\to L^p_\nu$ is bounded by Theorem~\ref{Theorem:T_w-bounded-intro}.
\end{proof}

We finish the paper providing three weights,
$\om(s)=s$, $\nu(s)=(1-s)^{p-1}\left(\log\left( \frac{e}{1-s}\right)\right)^{2(p-1)}$
and $\eta(s)=(1-s)^{p-1}\left(\log\left(\frac{e}{1-s}\right)\right)^{
p-1}$ such that $T_\om:L^p_\nu\to L^{p,\infty}_\eta$ is bounded for each $1<p<\infty$ by Theorem~\ref{Theorem:T_w-bounded-weak-L^p-case},
but $T_\om:L^p_\nu\to L^p_\eta$ is unbounded by Theorem~\ref{Theorem:T_w-bounded-L^p-case}.

\medskip

{\em{Acknowledgements.}}
 We would like to thank professor Francisco J. Mart\'{\i}n-Reyes for helpful conversations about
Hardy operators and pointing out relevant references on the topic.


\begin{thebibliography}{99}

\bibitem{AndersenMuckStudia82} K.~Andersen and B.~Muckenhoupt, Weighted weak type inequalities with applications to
                               Hilbert transforms and maximal functions, Studia Math. 72 (1982), no. 1,  9--26.


\bibitem{KPB}                   A.~L.~Bernardis, F.~C.~Mart\'{\i}n-Reyes and P.~Ortega Salvador,
                                Weighted inequalities for Hardy-Steklov Operators,
                                Canad. J. Math. 59 (2007), no.~2 276--295.


\bibitem{DMROIndiana12}         J.~Duoandikoetxea, F.~J.~Mart\'{\i}n-Reyes, S.~Oombrosi,
                               Calder\'on weights as Muckenhoupt weights,
                                Indiana Univ. Math. J. 62 (2013), no. 3, 891--910.


\bibitem{Duren1970}             P.~Duren, Theory of $H^p$ Spaces, Academic Press, New York-London 1970.

\bibitem{Garnett1981}           J.~Garnett, Bounded analytic functions, Academic Press, New York, 1981.





\bibitem{GoLa}                  A.~Gogatishvili and J.~Lang,
                                The generalized Hilbert operator with kernel
                                and variable integral limits in Banach spaces, J. Inequal. Appl. 4 (1999), no.~1, 1--16.



\bibitem{PKRthree}               T.~Korhonen,  J.~A.~ Pel\'aez and J. R\"atty\"a,
                                 Radial two weight inequality for maximal Bergman
                                 projection induced by a regular weight, preprint  submitted,
                                 https://arxiv.org/abs/1805.01256


\bibitem{Muckenhoupt1972}       B.~Muckenhoupt, Hardy's inequality with weights, Studia Math.{64} (1972), 31--38.

\bibitem{PelSum14}             J.~A.~ Pel\'aez, Small weighted Bergman spaces,
                                Proceedings of the summer school in complex and harmonic analysis, and related topics, (2016).

\bibitem{PR2014Memoirs}         J.~Pel\'aez and J.~R\"atty\"a,
                                Weighted Bergman spaces induced by rapidly increasing weights,
                                Mem.~Amer.~Math.~Soc.~227 (2014).

\bibitem{PelRathg}              J.~A.~ Pel\'aez and J. R\"atty\"a,
                                Generalized Hilbert operators on weighted Bergman spaces,
                                Adv. Math. 240 (2013), 227--267.

\bibitem{PelRat2014}            J.~A.~ Pel\'aez and J. R\"atty\"a,
                                Embedding theorems for Bergman spaces vis harmonic analysis,
                                 Math. Ann. 362 (2015), no. 1-2, 205–239.

\bibitem{PelRatKernels}          J. A. Pel\'aez and J. R\"atty\"a,
                                Two weight inequality for Bergman projection,
                                J. Math. Pures Appl.~(9) 105 (2016), no~1, 102--130.


\bibitem{PelRatSie2015}         J.~A.~ Pel\'aez, J. R\"atty\"a and K. Sierra, Embedding Bergman spaces into tent spaces,
                               Math.~Z. 281 (2015), no.~3, 1215--1237.




\bibitem{PR2018}                J. A. Pel\'aez and J. R\"atty\"a,
                                Bergman projections induced by radial weights, preprint.

\bibitem{PGR}                   F. Pérez-González and J. R\"atty\"a,
                                Derivatives of inner functions in weighted Bergman spaces and the Schwarz-Pick lemma,
                                Proc. Amer. Math. Soc. 145 (2017), no. 5, 2155--2166.

\bibitem{SawyerTams84}          E.~Sawyer, Weighted Lebesgue and Lorentz norm inequalities for the Hardy operator.
                                Trans. Amer. Math. Soc. 281 (1984), no. 1, 329–337.

\bibitem{SteinShak}             E.~M.~Stein and R.~Shakarchi,
                                Real  Analysis. Measure Theory, integration and Hilbert spaces,
                                Princeton Lectures in Analysis, 3. Princeton University Press, Princeton, NJ, 2005. xx+402 pp.



\end{thebibliography}
\end{document}